\RequirePackage{plautopatch}
\documentclass[12pt]{amsart}
\usepackage{color,graphicx,array, amssymb, amscd,slashed,amsmath,amscd,latexsym,multirow,mathrsfs}
\usepackage{hyperref}
\usepackage{tikz}
\usepackage{tikz-cd}
\usetikzlibrary{arrows.meta,decorations.markings}
\usepackage[hang,small]{caption}
\usepackage[subrefformat=parens]{subcaption}
\usetikzlibrary{fit,matrix}
\usepackage{enumitem}
\usepackage{bm}
\newlist{steps}{enumerate}{1}
\setlist[steps, 1]{label = Step \arabic*:}

\newtheorem{theorem}{Theorem}[section]
\newtheorem{corollary}{Corollary}[section]
\newtheorem{lemma}{Lemma}[section]
\newtheorem{proposition}{Proposition}[section]
\newtheorem{remark}{Remark}[section]
\newtheorem{example}{Example}[section]
\newtheorem{definition}{Definition}[section]

\numberwithin{equation}{section}

\numberwithin{equation}{section}
\newcommand{\R}{\mathbb R}

\newcommand{\C}{\mathbb C}

\newcommand{\Z}{\mathbb Z}

\newcommand{\tr}{\operatorname{tr}}

\renewcommand{\Re}{\operatorname{Re}}
\renewcommand{\Im}{\operatorname{Im}}

\newcommand{\SU}{\mathrm {SU}(2)}

\numberwithin{equation}{section}

\newcommand{\Ab}{\boldsymbol{\mathcal{A}}}

\newcommand{\ii}{\mathrm{i}}

\newcommand{\dd}{\mathrm{d}}

\newcommand{\sn}{\operatorname{sn}}
\newcommand{\cn}{\operatorname{cn}}
\newcommand{\dn}{\operatorname{dn}}

\newcommand{\LamR}{\Lambda_0}

\usepackage{amsmath}	
\begin{document}
\tikzset{->-/.style 2 args={
    postaction={decorate},
    decoration={markings, mark=at position #1 with {\arrow{>}}}
    },
    ->-/.default={0.5}{}
}

\tikzset{-<-/.style 2 args={
    postaction={decorate},
    decoration={markings, mark=at position #1 with {\arrow{<}}}
    },
    -<-/.default={0.5}{}
}
\title{Finite-Gap Solutions of the Pohlmeyer--Lund--Regge Equation and the Associated Curve Evolution}
\dedicatory{}
\textbf{}
\author[Y.~Kogo]{Yuhei Kogo}
\address{Department of Mathematics, Hokkaido University, Sapporo, 060-0810, Japan}
\email{yh1123\_kg\_5813@eis.hokudai.ac.jp}
\thanks{The author is supported by JST SPRING, Grant Number JPMJSP2119.}
\subjclass[2020]{Primary~53A04, Secondary~37K10}
\keywords{Curve evolution; Pohlmeyer-Lund-Regge equation; solitons}

 \date{\today}
\pagestyle{plain}

\begin{abstract}
We develop a finite-gap construction for the
Pohlmeyer--Lund--Regge (PLR) equation and the associated Lund--Regge curve evolution.
From the hyperelliptic spectral data we build a Baker--Akhiezer function and an \(\SU\)-frame,
yielding an explicit theta-quotient formula for the PLR solution.
We then derive criteria of the Lund-Regge curve:
under natural quasi-periodicity assumptions, \(s\)-closure and \(t\)-periodicity are each
equivalent to a critical-point condition for the corresponding quasimomentum differential
together with a phase quantization at the reconstruction point.
This provides a PLR analogue of the closure mechanism of Calini--Ivey.
\end{abstract}

\maketitle

\section{Introduction}
Many integrable nonlinear PDEs admit geometric realizations in terms of moving curves and surfaces \cite{Lamb,NSW}.
A classical example is the \emph{Hasimoto correspondence} \cite{Hashimoto}:
the vortex filament equation (VFE), i.e.\ binormal motion of an arclength
parametrized space curve, is transformed by the Hasimoto map into the focusing
nonlinear Schr\"odinger equation (NLS).
This correspondence makes the algebro--geometric
(finite-gap) method available in the geometric setting and, in particular,
allows one to reconstruct space curves from spectral data and to study
geometric properties such as closure and periodicity. For finite-gap VFE
filaments, Calini--Ivey \cite{CaliniIvey} gave a particularly clear closure
theory in terms of distinguished spectral points and a quantization condition.

\medskip
In this paper we study the analogous finite-gap problem for the
\emph{Pohlmeyer--Lund--Regge (PLR)} equation.
The PLR equation appears, for instance, in the Pohlmeyer reduction of
relativistic strings \cite{Pohlmeyer} and in the Lund--Regge model
\cite{LundRegge,Lund}.
It has also been studied from a geometric point of view: Fukumoto and
Miyajima \cite{FukumotoMiyajima} related the Lund--Regge equation to the
localized induction hierarchy, while Chen and Li \cite{Chen-Lin} investigated
the associated Lund--Regge surface and its evolution.
In our previous work \cite{KKM25} we studied the associated geometric curve flow
(the \emph{Lund--Regge evolution})
\[
  \gamma_{st}=\gamma_s\times \gamma_t,
\]
introduced a Hasimoto-type complex potential \(q(s,t)\), derived the
integrable PDE satisfied by \(q\), and exhibited its \(2\times2\) Lax pair.
Moreover, we proved a Sym-type reconstruction formula \cite{Sym} recovering the curve from
a solution of the Lax system.
These results place PLR in a setting parallel to VFE/NLS and suggest that a
corresponding finite-gap theory should also be available.

\medskip
Our first main result is a theta-functional construction of quasi-periodic
(finite-gap) solutions of the PLR equation and the associated \(\SU\)-frames.
Following the standard algebro--geometric scheme for \(2\times2\) Lax pairs
(cf.\ \cite{Belokolos,Date}), we adapt the construction to the reality
structure of the PLR Lax representation.
Theta-functional Baker--Akhiezer (BA) constructions for sine-Gordon type systems
already appear in the work of Date \cite{Date}; here we reformulate the BA
formalism for the PLR Lax pair and for the geometric reconstruction of curves
via the Sym formula.
This yields explicit theta-function formulas for both the PLR potential \(q\)
and the reconstructed curve evolution \(\gamma(s,t)\).

\medskip
Our second main result is a spectral theory of closure and periodicity for PLR
filaments.
In the VFE setting, one usually assumes periodicity in the arclength variable
and studies the spatial closure of the reconstructed curve; see
\cite{CaliniIvey}.
For PLR, however, the two variables \((s,t)\) play comparable roles, so it is
natural to treat time periodicity as well.
We derive both an \(s\)-closure criterion for the space curve
\(\gamma(\cdot,t)\) and a criterion for \(t\)-periodicity of the evolution
\(\gamma(s,\cdot)\) at a fixed reconstruction point.
These criteria are expressed in terms of two Abelian integrals
\(\Omega_1,\Omega_2\) evaluated at the reconstruction point: closure or
periodicity holds precisely when the corresponding quasimomentum differential
vanishes there and the associated phase satisfies a quantization condition.
This is the PLR analogue of the Calini--Ivey mechanism.

\medskip
A natural question in this framework is whether there exist filaments that are
doubly periodic in \((s,t)\).
In contrast to \cite{CaliniIvey}, which treats spatial closure for VFE, the PLR
setting leads to the problem of imposing both \(s\)-closure and
\(t\)-periodicity simultaneously.
We derive explicit necessary and sufficient conditions for each of these
properties, but we do not exhibit a spectral datum or reconstruction point for
which both hold at the same time.
We nevertheless treat the genus-one case explicitly, expressing all relevant
quantities in terms of elliptic integrals and Jacobi elliptic functions, and
present numerical examples of \(s\)-closure and \(t\)-periodicity taken
separately.

\subsection*{Main results and contributions}
The main contributions of this paper can be summarized as follows.

\begin{itemize}[leftmargin=2em]
\item \textbf{Finite gap Baker--Akhiezer function adapted to PLR equation and geometric reconstruction.}
  
For a hyperelliptic spectral curve equipped with an anti-holomorphic involution
  and an admissible degree-\(g\) divisor, we construct a vector-valued BA function
  characterized by prescribed poles and essential singularities.
  While related theta constructions appear in \cite{Date}, our formulation is
  tailored to the PLR reality conditions and to the subsequent Sym reconstruction.

\item \textbf{A theta function formula for the PLR potential and \(\SU\)-frame.}
  
We derive an explicit theta-quotient representation of the PLR potential \(q\)
  and construct the associated \(\SU\)-valued frame solving the PLR Lax pair.

\item \textbf{Spectral closure/periodicity criteria in both variables.}
  
Under the natural quasi-periodicity assumptions on the period vectors, we show
  that \(s\)-closure and \(t\)-periodicity reduce to two spectral conditions at
  the reconstruction point: a critical-point condition for \(d\Omega_1\) or
  \(d\Omega_2\), and a phase quantization condition for \(\Omega_1\) or
  \(\Omega_2\).
  This extends the Calini--Ivey closure mechanism to the PLR setting and treats
  time periodicity on the same footing as spatial closure.

\item \textbf{Explicit genus-one formulas and numerical examples.}
  
In genus one we express periods, Abelian differentials, and closure conditions
  in terms of elliptic integrals and Jacobi elliptic functions, producing
  practical formulas for computations and examples.
\end{itemize}

\subsection*{Organization of the paper}
Section~\ref{sec:PLR-LR} recalls the PLR equation, the Lund--Regge evolution, the
PLR potential, the Lax pair, and the Sym reconstruction formula.
Section~\ref{sec:finite-gap} develops the finite-gap construction via a
hyperelliptic spectral curve, normalized differentials, and a BA function, and
derives theta-function formulas for \(q\) and the associated frame.
Section~\ref{sec:closure} establishes the spectral criteria for \(s\)-closure
and \(t\)-periodicity at a reconstruction point.
Section~\ref{sec:genus1} specializes to genus one and provides explicit elliptic
formulas.
Finally, Section~\ref{sec:examples} presents numerical examples illustrating the
closure and periodicity phenomena.

\section{PLR equation and Lund--Regge evolution}\label{sec:PLR-LR}

In this section we recall the geometric formulation of the Pohlmeyer--Lund--Regge
equation and the associated Lax pair from our previous work~\cite{KKM25}.
Throughout the paper we identify $\R^3$ with $\mathfrak{su}(2)$ by
\begin{equation}\label{eq:R3-su2}
  (p,q,r)^{\mathsf T}\in\R^3
  \ \longleftrightarrow\
  \frac12\begin{pmatrix}
    \ii r & -p-\ii q\\[0.2em]
    p-\ii q & -\ii r
  \end{pmatrix}\in\mathfrak{su}(2),
\end{equation}
so that the cross product and the Euclidean inner product are represented as
\[
  a\times b = [a,b],\qquad
  \langle a,b\rangle = -2\tr(ab),
  \qquad a,b\in\mathfrak{su}(2).
\]

\subsection{Lund--Regge evolution and Hasimoto-type variable}\label{subsec:LR}

Let
\[
  \gamma:\R^2\to\R^3,\qquad (s,t)\longmapsto\gamma(s,t),
\]
be a smooth family of curves parametrized by arclength.
We consider the \emph{Lund--Regge evolution}
\begin{equation}\label{eq:LR}
  \gamma_{st} = \gamma_s\times\gamma_t,
\end{equation}
introduced in~\cite{LundRegge} as a geometric realization of the
Pohlmeyer--Lund--Regge equation and studied in detail in~\cite{KKM25}.
Let $\kappa$ and $\tau$ denote the curvature and torsion of $\gamma$, and let
$(T,N,B)$ be the Frenet frame along $\gamma$.  In~\cite{KKM25} we proved
that the evolution~\eqref{eq:LR} is equivalent to a certain system of
nonlinear PDEs for $(\kappa,\tau)$, and that this system can be encoded in a
single complex-valued function by a Hasimoto-type transform as follows.

\begin{definition}[PLR potential]\label{def:q}
Let $\gamma$ be a solution of the Lund--Regge evolution~\eqref{eq:LR} with
curvature $\kappa$ and torsion $\tau$.  We define the \emph{PLR potential}
\begin{equation}\label{eq:q-def}
  q(s,t)
   = \kappa(s,t)\,
     \exp\left(
       \ii\int^s (\tau(u,t)-1)\,\dd u
     \right).
\end{equation}
\end{definition}

The quantity $q$ plays the same role as the Hasimoto variable for the vortex
filament equation.  In particular, $|q|=\kappa$ and
\begin{equation}\label{eq:kappa-tau-from-q}
  \kappa = |q|,\qquad
  \tau = 1 + \partial_s\arg q.
\end{equation}

In terms of $q$ the Lund--Regge evolution reduces to a single complex
equation.

\begin{proposition}[Complex PLR equation {\cite[Cor.~3.1]{KKM25}}]
\label{prop:complex-PLR}
Let $\gamma$ be a Lund--Regge evolution and let $q$ be the PLR potential
defined by~\eqref{eq:q-def}. Then $q$ satisfies
\begin{equation}\label{eq:PLR-q}
  q_{st} + \frac12\,q\int^s (|q|^2)_t\,\dd u = 0.
\end{equation}
Conversely, given a solution $q$ of~\eqref{eq:PLR-q} there exists an
arclength-parametrized solution $\gamma$ of~\eqref{eq:LR} whose curvature and
torsion are recovered from $q$ by~\eqref{eq:kappa-tau-from-q}, up to rigid
motions in $\R^3$.
\end{proposition}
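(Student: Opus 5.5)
We plan to prove both directions by working in the Frenet frame and the parallel (natural) frame. For the forward direction, write $T=\gamma_s$, with $|T|=1$, so $T_s=\kappa N$, $N_s=-\kappa T+\tau B$, $B_s=-\tau N$. Differentiating $|\gamma_s|^2=1$ in $t$ gives $\langle T,\gamma_{st}\rangle=0$, which is consistent with \eqref{eq:LR} since $\gamma_s\times\gamma_t\perp T$. We decompose $\gamma_t=aT+bN+cB$. Then \eqref{eq:LR} reads
\[
T_t=T\times\gamma_t=-cN+bB .
\]
Compatibility $T_{st}=T_{ts}$ then expresses $(\kappa N)_t$ through $b,c$ and their $s$-derivatives. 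Comparing $\gamma_{ts}$ computed from the frame with $T_t$ gives the constraint ODEs
\[
a_s-\kappa b=0,\qquad b_s+\kappa a-\tau c=-c,\qquad c_s+\tau b=b .
\]
This is exactly where the shift $\tau-1$ enters.

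To package these relations we introduce the complex normal vector $\Phi=(N+\ii B)\exp(\ii\int^s\tau\,\dd u)$ (a Hasimoto-type vector), so that $T_s=\Re(\bar\psi\,\Phi)$ with $\psi=\kappa e^{\ii\int\tau}$. In order to absorb the $-c$, $+b$ terms we use $\Psi=\Phi e^{-\ii s}$, equivalently the potential $q$ of \eqref{eq:q-def}. With $w=b+\ii c$ suitably rotated, the three constraint equations become $a_s=\Re(\bar q w)$ together with a single complex equation $w_s=-qa$, up to the phase convention. The $t$-evolution of the frame gives $q_t=$ (a multiple of) $w$. The key conserved quantity is $|\gamma_t|^2=a^2+|w|^2$, and we normalize it by assuming $|\gamma_t|$ is constant in $s$, which is the natural normalization in \cite{KKM25}. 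From $\partial_s(a^2+|w|^2)=0$ we then get $a_t$ in terms of $\int^s(|q|^2)_t\,\dd u$. Substituting $w\propto q_t$ into $w_s=-qa$ yields $q_{st}=-qa$. Differentiating $a_s=\Re(\bar q q_t)=\tfrac12(|q|^2)_t$ gives $a=\tfrac12\int^s(|q|^2)_t\,\dd u$ up to a function of $t$, which is \eqref{eq:PLR-q}. The integration constant is absorbed into the indefinite integral, consistent with the $\int^s$ notation.

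For the converse, given $q$ solving \eqref{eq:PLR-q}, we set $a=\tfrac12\int^s(|q|^2)_t$ and $w=q_t$, and write down the $\mathfrak{su}(2)$-valued connection $U\,\dd s+V\,\dd t$ for the frame $(T,\Re\Psi,\Im\Psi)$ using the identification \eqref{eq:R3-su2}. Its zero-curvature condition $U_t-V_s+[U,V]=0$ is then checked to be equivalent to \eqref{eq:PLR-q} together with the definition of $a$. Since $\R^2$ is simply connected, the frame $F$ exists and is unique up to a constant rotation. We define $\gamma$ by integrating $\gamma_s=T$ and $\gamma_t=aT+\Re(\bar w\Psi)$; the compatibility $\partial_t T=\partial_s\gamma_t$ is once more part of the zero-curvature condition. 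Then $\gamma_{st}=T_t=T\times\gamma_t$ holds by construction, and the definition of the frame gives $\kappa=|q|$ and $\tau=1+\partial_s\arg q$, with $\gamma$ unique up to rigid motion.

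The main obstacle is bookkeeping of the phase factor $e^{\ii\int(\tau-1)}$ and of the sign and constant conventions: verifying that the $-c,+b$ terms coming from $T\times\gamma_t$ are exactly cancelled by the shift $\tau\mapsto\tau-1$, and fixing the normalization of $a$, i.e.\ the integration function in $\int^s$. I will first try the computation in the $\mathfrak{su}(2)$ matrix form, where the shift becomes a constant diagonal term $\tfrac{\ii}{2}\sigma_3$ in $U$ that is removable by gauge.
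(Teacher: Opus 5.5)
The paper does not prove this proposition. It is quoted from \cite{KKM25}, and inside the paper the converse is supplied by Proposition~\ref{prop:Lax-representation} together with the Sym formula (Theorem~\ref{thm:Sym}). Your route (Frenet decomposition of $\gamma_t$, the constraint system, a complexified normal frame, and direct integration for the converse) is the standard Hasimoto-type argument. Your converse is essentially the $\lambda=1$ slice of the paper's construction, up to a constant gauge: $\gamma_s=F(\partial_\lambda L)F^{-1}$ and $\gamma_t=F(\partial_\lambda M)F^{-1}=-FMF^{-1}$ at $\lambda=1$, so the Sym formula performs your quadrature implicitly.

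The outline is sound, but the pivotal forward step, ``the $t$-evolution of the frame gives $q_t=$ (a multiple of) $w$'', is asserted without computation. It is false without a normalization you never impose. With $\theta=\int^s(\tau-1)\,\dd u$, $q=\kappa e^{\ii\theta}$, $\Psi=(N+\ii B)e^{\ii\theta}$ and $w=(b+\ii c)e^{\ii\theta}$, you correctly have $\Psi_s=-qT-\ii\Psi$, $T_t=\Re(-\ii\bar w\Psi)$, $w_s=-aq$ and $a_s=\Re(\bar q w)$. Orthonormality forces $\Psi_t=-\ii wT+\ii R\Psi$ with $R$ real. The compatibilities $T_{st}=T_{ts}$ and $\Psi_{st}=\Psi_{ts}$ then give
\[
q_t=\ii(R-a)\,q-w,\qquad R_s=\Re(\bar q w)=a_s .
\]
Hence $R-a=r(t)$, $q_t=-w+\ii r(t)q$, and $q_{st}=aq+\ii r(t)q_s$. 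The extra term gives $\Im(q_{st}/q)=r\kappa_s/\kappa$, so it cannot be absorbed into the indefinite integral in \eqref{eq:PLR-q}. In particular, the integration function in $a$ is not the only freedom you must deal with.

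The fix uses a second, independent freedom: the $t$-dependent lower limit in \eqref{eq:q-def}. Replacing $\theta$ by $\theta+f(t)$ multiplies $q,w,\Psi$ by $e^{\ii f}$ and shifts $R$, hence $r$, by $f'$. Choosing $f'=-r$ gives $q_t=-w$. The minus sign is harmless, because your final equation is invariant under $(w,a)\mapsto(-w,-a)$. Then $q_{st}=aq$ and $a_s=-\tfrac12(|q|^2)_t$, and the remaining free function $a(s_0,t)=\langle\gamma_t,\gamma_s\rangle(s_0,t)$ is what $\int^s$ absorbs.

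Dually, in the converse the $t$-rotation rate of $\Psi$ in your $V$ must be exactly $a$; this is the $\Re(q_{st}/q)$ entry of $M$ in \eqref{eq:L-M-PLR}. It is cleanest to define $a:=q_{st}/q$, which is real by \eqref{eq:PLR-q}, rather than as an unspecified antiderivative. The zero-curvature check then reduces exactly to $q_{st}=aq$ and $a_s=-\tfrac12(|q|^2)_t$.

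Two smaller points. First, drop the $|\gamma_t|$ step. The identity $\partial_s|\gamma_t|^2=2\langle\gamma_s\times\gamma_t,\gamma_t\rangle=0$ holds automatically, so it is not an assumption. Moreover, $\partial_s(a^2+|w|^2)=2a\bigl(a_s-\Re(\bar q w)\bigr)$ vanishes identically by the constraint equations, so it yields nothing, in particular nothing about $a_t$. Second, you need $\kappa>0$, respectively $q\neq0$, for the Frenet frame and for $\arg q$.
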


\begin{remark}
When the torsion is identically $\tau\equiv 1$, the potential $q$ is
real-valued and the equation~\eqref{eq:PLR-q} reduces to the sine-Gordon
equation; see~\cite[Remark~3.2]{KKM25} for details.
\end{remark}

\subsection{Lax pair in terms of the PLR potential}\label{subsec:Lax}

The complex equation~\eqref{eq:PLR-q} is integrable.
Following~\cite{KKM25}, we introduce an $\SU$-valued frame
\[
  F:\R^2\to\SU,\qquad (s,t)\longmapsto F(s,t),
\]
depending smoothly on a spectral parameter $\lambda\in\C^\times$ and
satisfying the Lax pair
\begin{equation}\label{eq:Lax-PLR}
  (F)_s = F L,\qquad
  (F)_t = F M,
\end{equation}
with
\begin{equation}\label{eq:L-M-PLR}
  L
   = \frac12\begin{pmatrix}
       \ii\lambda & q\\[0.2em]
       -\bar q & -\ii\lambda
     \end{pmatrix},\qquad
  M
   = \frac{\ii}{2\lambda}
     \begin{pmatrix}
       -\Re\bigl(q_{st}/q\bigr) & -q_t\\[0.2em]
       -\overline{q_t} & \Re\bigl(q_{st}/q\bigr)
     \end{pmatrix}.
\end{equation}

\begin{proposition}[{\cite[Cor.~3.2]{KKM25}}]
\label{prop:Lax-representation}
The compatibility condition
\[
  (F_{s})_{t} = (F_{t})_{s}
  \quad\Longleftrightarrow\quad
  L_{t} - M_{s} + [L,M] = 0
\]
is equivalent to the PLR equation~\eqref{eq:PLR-q} for $q$.
Conversely, for any solution $q$ of~\eqref{eq:PLR-q} there exists an
$\SU$-valued solution $F$ of~\eqref{eq:Lax-PLR} uniquely determined
up to left multiplication by a constant matrix.
\end{proposition}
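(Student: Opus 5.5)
The plan is to compute the zero-curvature expression $L_t - M_s + [L,M]$ directly and sort it by powers of $\lambda$. Write $r:=\Re(q_{st}/q)$ and set
\[
M=\frac{\ii}{2\lambda}\begin{pmatrix}-r & -q_t\\ -\overline{q_t} & r\end{pmatrix}.
\]
Since $L$ is affine in $\lambda$ and $M$ is proportional to $\lambda^{-1}$, the expression $L_t - M_s + [L,M]$ has only a $\lambda^0$ part and a $\lambda^{-1}$ part.

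\textbf{The $\lambda^0$ coefficient.} It is $L_t$ together with the part of $[L,M]$ coming from $\tfrac{\ii\lambda}{2}\sigma_3$. Let $\sigma_3=\mathrm{diag}(1,-1)$. Then $[\sigma_3,X]$ doubles the off-diagonal entries of $X$ with signs $\pm$, and one checks:
\begin{itemize}
\item The diagonal entries of this coefficient vanish identically, because $L_t$ has no diagonal part.
\item The $(1,2)$ entry is
\[
\tfrac12 q_t+\bigl[\tfrac{\ii}{2}\sigma_3,\tfrac{\ii}{2}\bigl(\begin{smallmatrix}0&-q_t\\-\overline{q_t}&0\end{smallmatrix}\bigr)\bigr]_{12}=\tfrac12q_t-\tfrac12 q_t=0,
\]
and similarly for the $(2,1)$ entry.
\end{itemize}
So the $\lambda^0$ part vanishes identically. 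The off-diagonal entries of $M$ were chosen precisely for this.

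\textbf{The $\lambda^{-1}$ coefficient.} It is $-M_s$ plus the commutator of the $\lambda$-independent part $\tfrac12\bigl(\begin{smallmatrix}0&q\\-\bar q&0\end{smallmatrix}\bigr)$ of $L$ with $\lambda M$.
\begin{itemize}
\item The $(1,2)$ entry gives
\[
\tfrac{\ii}{2}q_{st}+\tfrac{\ii}{2}\,q\,r=0,\qquad\text{i.e.}\qquad q_{st}+rq=0.
\]
The $(2,1)$ entry is its complex conjugate.
\item The diagonal entry gives
\[
-\tfrac{\ii}{2}\bigl(-r_s\bigr)+\tfrac{\ii}{4}\bigl(-q\overline{q_t}-\bar q q_t\bigr)=0,\qquad\text{i.e.}\qquad r_s=\tfrac12(|q|^2)_t.
\]
\end{itemize}
The compatibility condition is therefore equivalent to the pair
\[
q_{st}=-rq,\qquad r_s=\tfrac12(|q|^2)_t .
\]

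\textbf{Matching with \eqref{eq:PLR-q}.} If $q$ solves \eqref{eq:PLR-q}, then $q_{st}/q=-\tfrac12\int^s(|q|^2)_t\,\dd u$ is real. Hence $r=-\tfrac12\int^s(|q|^2)_t\,\dd u$, which gives both equations of the pair. Conversely, the pair gives $r=\tfrac12\int^s(|q|^2)_t\,\dd u$ up to the integration constant implicit in $\int^s$, with sign conventions to be tracked carefully. Substituting into $q_{st}=-rq$ yields \eqref{eq:PLR-q}.

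The main obstacle is this sign and constant bookkeeping. The first equation of the pair also forces $q_{st}/q$ to be real, which is built into \eqref{eq:PLR-q}. I would check each sign explicitly in the commutator entries. The nonlocal term must be interpreted with the same normalization of $\int^s$ as in \eqref{eq:PLR-q}, which is consistent with taking $r:=\Re(q_{st}/q)$.

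\textbf{The converse.} Given a solution $q$, $L$ and $M$ are smooth, and by the computation above they satisfy the zero-curvature equation. Both are $\mathfrak{su}(2)$-valued for real $\lambda$: they are traceless and skew-Hermitian, since $r$ is real.
\begin{itemize}
\item The Frobenius theorem on the simply connected domain $\R^2$ gives a solution $F$ of \eqref{eq:Lax-PLR} with any prescribed $F(s_0,t_0)\in\SU$.
\item $F$ stays in $\SU$ because $F^{-1}\dd F$ lies in $\mathfrak{su}(2)$.
\item If $F$ and $\tilde F$ are two solutions, then $\tilde F F^{-1}$ has vanishing $s$- and $t$-derivatives. Hence $\tilde F=CF$ with $C$ constant, which is the asserted uniqueness up to left multiplication by a constant matrix.
\end{itemize}
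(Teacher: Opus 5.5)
Your overall strategy is the natural one: expand the zero-curvature expression in $\lambda$, integrate the resulting scalar equations in $s$, then use Frobenius and constancy of $\tilde F F^{-1}$. The paper itself gives no proof and only quotes the result from its earlier work. However, the sign bookkeeping you flagged as the main obstacle is actually broken, and as written the argument does not establish the equivalence.

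From $F_s=FL$, $F_t=FM$ one gets $F_{st}=F(ML+L_t)$ and $F_{ts}=F(LM+M_s)$. So the integrability condition is $L_t-M_s-[L,M]=0$. The form with $+[L,M]$ displayed in the statement is the one for the left-multiplied system $F_s=LF$, $F_t=MF$.

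Write $L=\lambda L_1+L_0$ with $L_1=\tfrac{\ii}{2}\sigma_3$, and $M=\lambda^{-1}M_{-1}$. You compute with the displayed sign, but inconsistently:
\begin{itemize}
\item In the $\lambda^0$ part, $[L_1,M_{-1}]_{12}=-\tfrac14\cdot 2\cdot(-q_t)=+\tfrac12 q_t$. Hence the $\lambda^0$ part of $L_t-M_s+[L,M]$ is $\bigl(\begin{smallmatrix}0&q_t\\-\overline{q_t}&0\end{smallmatrix}\bigr)\neq 0$. Your cancellation $\tfrac12q_t-\tfrac12q_t$ silently uses the opposite sign.
\item In the $\lambda^{-1}$ part you do use $+[L,M]$. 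The resulting pair $q_{st}=-rq$, $r_s=\tfrac12(|q|^2)_t$ is incompatible with $r=\Re(q_{st}/q)$. The first equation makes $q_{st}/q=-r$ real, hence $r=\Re(q_{st}/q)=-r$, so $r\equiv 0$. That forces $q_{st}=0$ and $(|q|^2)_t=0$, which is not equivalent to \eqref{eq:PLR-q}.
\end{itemize}
This is also why your two directions do not match. A solution of \eqref{eq:PLR-q} has $r=-\tfrac12\int^s(|q|^2)_t\,\dd u$, which satisfies $q_{st}=+rq$ and $r_s=-\tfrac12(|q|^2)_t$, the negatives of your pair. Your converse instead uses $r=+\tfrac12\int^s(|q|^2)_t\,\dd u$.

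The fix is to use $L_t-M_s-[L,M]=0$ throughout. Then the $\lambda^0$ part $L_{0,t}-[L_1,M_{-1}]$ vanishes identically. Since
$[L_0,M_{-1}]=\tfrac{\ii}{4}\bigl(\begin{smallmatrix}-(|q|^2)_t&2rq\\2r\bar q&(|q|^2)_t\end{smallmatrix}\bigr)$,
the $\lambda^{-1}$ part $-M_{-1,s}-[L_0,M_{-1}]$ has $(1,2)$ entry $\tfrac{\ii}{2}(q_{st}-rq)$ and $(1,1)$ entry $\tfrac{\ii}{2}\bigl(r_s+\tfrac12(|q|^2)_t\bigr)$. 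The remaining entries follow from skew-Hermitian tracelessness.

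The pair $q_{st}=rq$, $r_s=-\tfrac12(|q|^2)_t$ is consistent with $r=\Re(q_{st}/q)$; the first equation just says $q_{st}/q$ is real. Taking $r=q_{st}/q=-\tfrac12\int^s(|q|^2)_t\,\dd u$, up to the additive function of $t$ hidden in $\int^s$, both directions of the equivalence with \eqref{eq:PLR-q} go through. Note also that $M$ is only defined where $q\neq 0$.

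Your treatment of the converse is fine as it stands: Frobenius on $\R^2$, $\mathfrak{su}(2)$-valuedness of $L,M$ for real $\lambda\neq 0$, and $\partial_s(\tilde F F^{-1})=\partial_t(\tilde F F^{-1})=0$.
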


In our previous paper~\cite{KKM25} we also showed that, after an
appropriate diagonal gauge transformation, the frame $F$ coincides
with the gauged Frenet frame of a Lund--Regge evolution at $\lambda=1$.
Therefore the spectral problem associated with~\eqref{eq:Lax-PLR} encodes both
the PLR equation for the potential $q$ and the geometry of the curve
$\gamma$.

\subsection{Sym representation of PLR filaments}\label{subsec:Sym}

The geometric content of the Lax pair~\eqref{eq:Lax-PLR} is summarized by a
Sym type representation formula.

\begin{theorem}[Sym formula {\cite[Thm.~3.2]{KKM25}}]
\label{thm:Sym}
Let $q$ be a solution of the PLR equation~\eqref{eq:PLR-q} and let
$F$ be a solution of the Lax pair~\eqref{eq:Lax-PLR}.  Define
\begin{equation}\label{eq:Sym-PLR}
  \gamma(s,t)
   = \left.
      \bigl(\partial_{\lambda} F(s,t)\bigr)
      F(s,t)^{-1}\right|_{\lambda=1}
   \in\mathfrak{su}(2),
\end{equation}
where $\partial_{\lambda} = \partial/\partial\lambda$.
Under the identification~\eqref{eq:R3-su2}, the map
$\gamma:\R^2\to\R^3$ is an arclength-parametrized solution of the
Lund--Regge evolution~\eqref{eq:LR}.  Its curvature and torsion are related
to $q$ by~\eqref{eq:kappa-tau-from-q}.  Conversely, every Lund--Regge
evolution arises in this way from a solution of~\eqref{eq:PLR-q}.
\end{theorem}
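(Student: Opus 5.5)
The plan is to differentiate the Sym formula directly, using only the Lax pair~\eqref{eq:Lax-PLR}. Write $\dot{}=\partial_\lambda$. From $F_s=FL$ we get $(\dot F)_s=\dot F L+F\dot L$. Together with $(F^{-1})_s=-LF^{-1}$ this gives
\[
  \gamma_s=\bigl(\dot F F^{-1}\bigr)_s\big|_{\lambda=1}=F\,\dot L\,F^{-1}\big|_{\lambda=1}.
\]
Since $\dot L=\frac12\,\mathrm{diag}(\ii,-\ii)$, the vector $\gamma_s=\mathrm{Ad}_F(\dot L)$ is the image of a constant unit vector under the identification~\eqref{eq:R3-su2}: by $\langle a,b\rangle=-2\tr(ab)$ we have $|\dot L|^2=-2\tr(\dot L^2)=1$. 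Because $\mathrm{Ad}_F$ is an isometry for $F\in\SU$, $|\gamma_s|=1$, so $\gamma$ is parametrized by arclength. In the same way,
\[
  \gamma_t=F\,\dot M\,F^{-1}\big|_{\lambda=1},\qquad \dot M=-\lambda^{-1}M .
\]
We also check that $\gamma\in\mathfrak{su}(2)$. For $\lambda$ real, $L$ and $M$ lie in $\mathfrak{su}(2)$ (this uses that $\Re(q_{st}/q)$ is real), so $F$ is $\SU$-valued on the real axis. Hence $\dot F F^{-1}$ is the derivative of an $\SU$-valued curve, right-translated to the identity, and therefore lies in $\mathfrak{su}(2)$.

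Next we compute $\gamma_{st}$. Differentiating $\gamma_s=F\dot LF^{-1}$ in $t$ with $F_t=FM$ gives
\[
  \gamma_{st}=F\,[M,\dot L]\,F^{-1}.
\]
On the other hand, $\gamma_s\times\gamma_t=[\gamma_s,\gamma_t]=F[\dot L,\dot M]F^{-1}$. At $\lambda=1$ we have $\dot M=-M$, so $[\dot L,\dot M]=-[\dot L,M]=[M,\dot L]$. This proves~\eqref{eq:LR}. The key point is that $M$ is homogeneous of degree $-1$ in $\lambda$; that makes the identity exact at $\lambda=1$ and explains the choice of evaluation point.

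For the curvature and torsion, write $e_3:=F\dot LF^{-1}=T$. Then $T_s=F[L,\dot L]F^{-1}$. The $\lambda$-term of $L$ commutes with $\dot L$, so only the off-diagonal part $\frac12\begin{pmatrix}0&q\\-\bar q&0\end{pmatrix}$ contributes. A direct computation in the basis~\eqref{eq:R3-su2} gives $|T_s|=|q|$, hence $\kappa=|q|$, and it expresses $N$ through $F$ applied to the off-diagonal generator rotated by $\arg q$. Differentiating once more (a Frenet-type computation in $\mathfrak{su}(2)$) gives the torsion as the rotation speed $\lambda+\partial_s\arg q$ at $\lambda=1$, i.e.\ $\tau=1+\partial_s\arg q$, which is~\eqref{eq:kappa-tau-from-q}. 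I expect this torsion bookkeeping to be the main obstacle: signs and factors of $\frac12$ in the identification~\eqref{eq:R3-su2} have to be tracked carefully, and the phase of $q$ must match the normalization $\int^s(\tau-1)$ in~\eqref{eq:q-def}.

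For the converse, let $\gamma$ be a Lund--Regge evolution. Form $q$ by~\eqref{eq:q-def}; by Proposition~\ref{prop:complex-PLR} it solves~\eqref{eq:PLR-q}. Proposition~\ref{prop:Lax-representation} then supplies an $\SU$-valued $F$ solving~\eqref{eq:Lax-PLR}. The Sym curve built from $F$ has the same $\kappa$ and $\tau$, so by the uniqueness part of Proposition~\ref{prop:complex-PLR} it agrees with $\gamma$ up to a rigid motion. A rigid motion can be absorbed by two changes to $F$. A rotation $\gamma\mapsto A\gamma A^{-1}$ is realized by $F\mapsto AF$. A translation by a constant $c$ is realized by $F\mapsto \exp\bigl((\lambda-1)c\bigr)F$, which shifts $\dot FF^{-1}$ at $\lambda=1$ by exactly $c$ and still solves the Lax pair, since the Lax pair is invariant under left multiplication by $\lambda$-dependent constants.
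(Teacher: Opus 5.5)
Your forward direction is correct. The identities $\gamma_s=F\dot LF^{-1}$ and $\gamma_t=F\dot MF^{-1}$, together with $\dot M=-M$ at $\lambda=1$, give arclength and \eqref{eq:LR}. The torsion computation you only sketch also works out: one finds $N_s=-\kappa T+(1+\partial_s\arg q)B$. The paper itself gives no proof of this theorem; it quotes it from \cite{KKM25}.

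The converse, however, has a genuine gap. Equality of $\kappa(s,t)$ and $\tau(s,t)$ gives, by the fundamental theorem of space curves, only a rigid motion $A(t)$ for each fixed $t$. Proposition~\ref{prop:complex-PLR} asserts existence; it does not say that $(\kappa,\tau)$ determine an evolution up to a single rigid motion. A $t$-dependent $A(t)$ cannot be absorbed into $F$, because the only freedom compatible with the Lax pair is left multiplication by factors independent of $(s,t)$.

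The step in fact fails. Take $a>0$, $k=(a^2+b^2)^{-1/2}$ and a nonconstant $\phi(t)$. The stationary helix $\gamma_0(s)=(a\cos ks,\,a\sin ks,\,bks)$ and
\[
  \tilde\gamma(s,t)=\bigl(a\cos(ks+\phi(t)),\,a\sin(ks+\phi(t)),\,bks+(b-1)\phi(t)\bigr)
\]
are both arclength Lund--Regge evolutions (a short direct check). They have the same $\kappa=ak^2$ and $\tau=bk^2$, but no single rigid motion carries one to the other. If you form $q$ by \eqref{eq:q-def} with a fixed base point, both curves give the same $t$-independent $q$. Then $M\equiv0$ and the Sym curve is stationary, so it is not congruent to $\tilde\gamma$.

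What $(\kappa,\tau)$ do not see is the $t$-dependent phase of $q$, i.e.\ the normalization of $\int^s(\tau-1)\,\dd u$. That phase is dictated by how the normal frame moves in $t$. For $\tilde\gamma$ the potential that works is $ak^2\exp\bigl(\ii(bk^2-1)s+\ii\phi(t)/k\bigr)$.

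The converse has to be argued at the level of frames. This is what the paper alludes to when it says that $F|_{\lambda=1}$ is the gauged Frenet frame.
\begin{itemize}
\item Take an $\SU$-lift $\Phi$ of the Frenet frame rotated about $T$, with $\mathrm{Ad}_\Phi\dot L=T$ and $\Phi^{-1}\Phi_s=L|_{\lambda=1}$. This fixes the rotation, i.e.\ the phase of $q$, only up to a function $c(t)$.
\item Write $\gamma_t=\mathrm{Ad}_\Phi W$ and $Y=\Phi^{-1}\Phi_t$. The Lund--Regge equation gives $Y+W=\mu\dot L$ for a scalar $\mu$.
\item The $\dot L$-component of $\gamma_{st}=\gamma_{ts}$, combined with the zero-curvature equation for $\Phi$ (using that the $\dot L$-component of $\Phi^{-1}\Phi_s$ is $\equiv1$), gives $\mu_s=0$.
\item Replacing $\Phi$ by $\Phi\exp(c(t)\dot L)$ shifts $\mu$ by $c'(t)$, so you can arrange $\mu\equiv0$.
\item The remaining components then force $Y=M|_{\lambda=1}$; in particular $q_{st}/q$ is real and $q$ solves \eqref{eq:PLR-q} with the matching normalization of the nonlocal term.
\end{itemize}
Now let $F$ be the solution with $F|_{\lambda=1}=\Phi$. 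Your own formulas give $\tilde\gamma_s=\gamma_s$ and $\tilde\gamma_t=-\mathrm{Ad}_\Phi M|_{\lambda=1}=\gamma_t$. Hence $\tilde\gamma-\gamma$ is constant, and your $\exp\bigl((\lambda-1)c\bigr)$ trick finishes the proof.
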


\begin{remark}[Reconstruction point]\label{rem:reconstruction-point}
In Theorem~\ref{thm:Sym} we evaluate the Sym formula at \(\lambda=1\),
because (after a diagonal gauge) the frame \(F(\cdot,\cdot;\lambda)\) at \(\lambda=1\)
agrees with the gauged Frenet frame of a Lund--Regge evolution \cite{KKM25}.
More generally, for any fixed real \(\LamR>0\) one may reconstruct an
\emph{associated} curve by
\[
  \gamma(s,t)
  = \left.(\partial_\lambda F)F^{-1}\right|_{\lambda=\LamR}.
\]
Throughout the finite-gap part of this paper we work with such a reconstruction
point \(\LamR\) (cf.\ \eqref{eq:Sym}), and the choice \(\LamR=1\) recovers the
geometric curve in Theorem~\ref{thm:Sym}.
\end{remark}

\begin{remark}
In particular, the family $\{\gamma(\,\cdot\,,t)\}_{t\in\R}$ is the PLR
filament associated with the finite-gap solution $q$ that we construct in
Section~\ref{sec:finite-gap}.  The closure and geometric properties of
$\gamma$ will be studied in Sections~\ref{sec:closure} and~\ref{sec:genus1}.
\end{remark}

\section{Finite-gap solutions}\label{sec:finite-gap}

In this section we construct a class of quasi-periodic (finite-gap) solutions
of the PLR equation~\eqref{eq:PLR-q} and the associated Lund--Regge filaments.
Our construction follows the standard algebro-geometric scheme for
\(2\times 2\) Lax pairs (see, for example, \cite{Belokolos,Date}),
adapted to the reality conditions of the PLR Lax pair~\eqref{eq:Lax-PLR}.

\subsection{Spectral curve and real structure}\label{subsec:spectral-curve}
Let \(\mathcal{R}\) be a genus-\(g\) hyperelliptic Riemann surface defined by
\[
\mu^2 = \prod_{j=1}^{g+1} (\lambda - \lambda_j)(\lambda - \bar\lambda_j), \quad 
\lambda_j \neq \lambda_k(j\neq k),\ \lambda_j\neq\bar\lambda_k,\lambda_j \neq 0.
\]

Let \(P_{\infty}^\pm\) and \(P_0^\pm\) be the points over \(\lambda = \infty\) and \(0\), with local parameters \(z = \lambda^{-1}\) near \(P_{\infty}^\pm\), and \(\lambda\) near \(P_0^\pm\).
There exists a fixed-point-free anti-holomorphic involution
\[
\sigma : (\mu, \lambda) \mapsto (-\bar\mu, \bar\lambda)
\]
on the Riemann surface \(\mathcal{R}\).

Let \(\{a_1, \dots, a_g, b_1, \dots, b_g\}\) be a homology basis of \(\mathcal{R}\), satisfying
\[
a_i \cdot a_j = b_i \cdot b_j = 0, \quad a_i \cdot b_j = \delta_{ij}.
\]

We choose a canonical basis such that
\begin{align*}
\sigma(a_j) &= a_j, \\
\sigma(b_j) &= -b_j + \sum_{k \ne j} a_k, \quad j = 1, \dots, g.
\end{align*}

This homology basis is illustrated in the Figure\ref{fig:homologybasis}.

\begin{figure}
\begin{tikzpicture}[scale=0.75, transform shape]
  
  
  
  \node at (2,1.8) {$\lambda_{g+1}$}; 
  \node at (2,-1.8) {$\bar\lambda_{g+1}$};
  \draw (2,1.5) -- (2,-1.5);
  \draw[-<-] (5,0) ellipse (0.8cm and 2.4cm);
  \node at (5,1.8) {$\lambda_{1}$}; 
  \node at (5,-1.8) {$\bar\lambda_{1}$};
  \draw (5,1.5) -- (5,-1.5);
  \node at (6.2,0.3) {$a_1$};
  \draw[-<-] (8,0) ellipse (0.8cm and 2.4cm);
  \node at (8,1.8) {$\lambda_{2}$}; 
  \node at (8,-1.8) {$\bar\lambda_{2}$};
  \draw (8,1.5) -- (8,-1.5);
  \node at (9.2,0.3) {$a_2$};
  
  \draw[ultra thick, loosely dotted](9,1.3) -- (11,1.3);

  \draw[-<-] (12,0) ellipse (0.8cm and 2.4cm);
  \node at (12,1.8) {$\lambda_{g}$}; 
  \node at (12,-1.8) {$\bar\lambda_{g}$};
  \draw (12,1.5) -- (12,-1.5);
  \node at (13.2,0.3) {$a_g$};
  
  \draw[-<-]
    (2,1) to[out=180,in=-90] (1,1.8)
    to[out=90,in=180] (3.5,2.5)
    to[out=0,in=90] (6,1.8)
    to[out=-90,in=0](5,1);
  \draw[dashed,-<-]
    (5,1) to[out=180,in=0](2,1);
    \node at (3.5,2.8) {$b_1$};
  \draw[-<-]
    (2,0.8) to[out=180,in=-90] (0.8,1.8)
    to[out=90,in=180] (5,3)
    to[out=0,in=90] (9,1.8)
    to[out=-90,in=0](8,0.8);
  \draw[dashed,-<-]
    (8,0.8) to[out=180,in=0](5,-3)
    to[out=180,in=0](2,0.8);
    \node at (4.5,3.3) {$b_2$};
  \draw[-<-]
    (2,0.6) to[out=180,in=-90] (0.6,1.8)
    to[out=90,in=180] (7,4)
    to[out=0,in=90] (13,1.8)
    to[out=-90,in=0](12,0.6);
  \draw[dashed,-<-]
    (12,0.6) to[out=180,in=0](8,-3.5)
    to(5,-3.5)
    to[out=180,in=0](2,0.6);
    \node at (6,4.3) {$b_g$};

  \node at (-1,0) {$\cdot$};
  \node at (-1,0.3) {$\infty$};


  \end{tikzpicture}
  \caption{Homology basis for hyperelliptic Riemann surface $\mathcal{R}$}\label{fig:homologybasis}
\end{figure}

\subsection{Abelian differentials and Abel map}\label{subsec:differentials}
  Let \(\omega_1,\dots,\omega_g\) be holomorphic differentials on \(\mathcal{R}\), normalized by
\[
\int_{a_k} \omega_j = 2\pi i\, \delta_{kj}, \quad j,k = 1,\dots,g.
\]
Define the period matrix \(\tau = (\tau_{jk})\) by
\[
\tau_{jk} = \int_{b_k} \omega_j.
\]
The associated Riemann theta function is given by
\[
\theta(\mathbf{u}) = \sum_{\mathbf{n} \in \mathbb{Z}^g} 
\exp\left( \tfrac{1}{2} \mathbf{n} \tau {}^t\mathbf{n} + \mathbf{n} {}^t\mathbf{u} \right).
\]
The series is absolutely convergent; this follows from the fact that \(\operatorname{Re}\tau\) is a negative-definite matrix. Moreover, if \(\mathbf{e}_k\) are the standard basis vectors of \(\C^g\) and \(\boldsymbol{\tau}_k = \tau \mathbf{e}_k\), for \(k=1,\dots,g\), then
\begin{equation}
    \theta(\mathbf{u} + 2\pi i \mathbf{e}_k) = \theta(\mathbf{u}),\quad \theta(\mathbf{u} + \boldsymbol{\tau}_k) = \exp(-\frac{1}{2}\tau_{kk}-u_k)\theta(\mathbf{u}). 
\end{equation}

 Under this choice of homology basis, the period matrix \(\tau\) satisfies the following property:
\begin{equation}\label{eq:taucondition}
\Im \tau_{jk} =
\begin{cases}
\pi & \text{if } j\neq k, \\[0.3ex]
0   & \text{if } j = k.
\end{cases}
\end{equation}

Moreover, the Riemann theta function satisfies the conjugation symmetry:
\begin{equation}\label{eq:thetaconjugate}
\overline{\theta(\mathbf{u})} = \theta(\bar{\mathbf{u}}).
\end{equation}
    Let \(\Gamma\) be the lattice in \(\mathbf{C}^g\) generated by the columns of the matrix \((2\pi i I \mid \tau)\). The Jacobian variety of \(\mathcal{R}\) is defined as
\[
\operatorname{Jac}(\mathcal{R}) = \mathbf{C}^g / \Gamma.
\]
The map
\[
\Ab_Q(P) = \left( \int_Q^P \omega_1, \dots, \int_Q^P \omega_g \right) \bmod \Gamma,
\]
is called the \textbf{Abel-Jacobi map}.

\subsection{Baker--Akhiezer function and theta representation}
\label{subsec:BA-theta}
We now define finite-gap Baker-Akhiezer functions.

Let \(\Omega_j\) and \(d\Omega_j\) \((j = 1, 2, 3)\) be the normalized Abelian integrals and differentials defined by:
\begin{align*}
    \Omega_1(P) &= \int_{\bar \lambda_{g+1}}^P d\Omega_1 
    \sim \pm\left( \frac{1}{z} - \frac{E}{2} \right) 
    &&\text{as } P \to P_{\infty}^{\pm}, \quad E \in \R\\
    \Omega_2(P) &= \int_{\bar \lambda_{g+1}}^P d\Omega_2 
    \sim \pm\left( \frac{1}{\lambda} - \frac{F}{2} \right)
    &&\text{as } P \to P_0^{\pm},  \quad F \in \R\\
    \Omega_3(P) &= \int_{\bar \lambda_{g+1}}^P d\Omega_3
    \sim \mp\left( \log z - \tfrac{\pi i}{2} + \tfrac{1}{2}\log\beta \right)
    &&\text{as } P \to P_{\infty}^{\pm},\quad \beta > 0.
\end{align*}

Let the \(b\)-periods of these differentials be
\[
U_j = \int_{b_j} d\Omega_1, \quad V_j = \int_{b_j} d\Omega_2, \quad r_j = \int_{b_j} d\Omega_3 = \int_{P_\infty^-}^{P_{\infty}^+}\omega_j, \quad j = 1,\dots,g,
\]
and define the vectors \(\mathbf{U} = (U_1, \dots, U_g)\), \(\mathbf{V} = (V_1, \dots, V_g)\), \(\mathbf{r} = (r_1, \dots, r_g)\).

We set
\[
\frac{H}{2}:=\Omega_2(P_\infty^+)=\int_{\bar{\lambda}_{g+1}}^{P_\infty^+} d\Omega_2.
\]

For a divisor $D$ on $\mathcal R$, let
\[
  L(D):=\Bigl\{ f\ \text{is meromorphic on }\mathcal R \;\Bigm|\; (f)+D\ge 0 \Bigr\}\cup\{0\}.
\]

Let $\delta=P_1+\cdots+P_g$ be a positive divisor of degree $g$ such that
\begin{enumerate}[label=\textup{(D\arabic*)},leftmargin=2.3em]
\item $\dim_{\C} L(\delta)=1$,
\item $P_j\notin\{P_\infty^\pm,P_0^\pm\}$ for $j=1,\dots,g$,
\item $\sigma\delta-\delta-P_\infty^-+P_\infty^+\equiv 0 \pmod{\Gamma}$.
\end{enumerate}
Set
\[
  \mathbf{D}=\Ab_-(\delta)+K_-,\qquad
  \mathbf{W}(s,t)=-\frac{\ii}{2}\bigl(s\,\mathbf{U}+t\,\mathbf{V}\bigr),
\]
where $\Ab_-:=\Ab_{P_\infty^-}$ and $K_-=(K_{-,1},\dots,K_{-,g})$ is the Riemann constant
with base point $P_\infty^-$:
\[
K_{-,j} = \frac{1}{2}\tau_{jj} - \sum_{k\neq j} \int_{a_k} \omega_k(P) \int_{P_\infty^-}^P \omega_j,
\quad j=1,\dots,g.
\]

\begin{lemma}[Normalized finite-gap Baker--Akhiezer function]
\label{lem:BA-PLR}
There exists a unique vector-valued function
\[
  \tilde\psi(P;s,t)=\bigl(\tilde\psi_1(P;s,t),\tilde\psi_2(P;s,t)\bigr)
\]
such that for each $(s,t)\in\R^2$ the following hold:
\begin{enumerate}[label=\textup{(BA\arabic*)},leftmargin=2.3em]
\item $\tilde\psi_j(\,\cdot\,;s,t)$ is meromorphic on
      $\mathcal{R}\setminus\{P_\infty^\pm,P_0^\pm\}$ and its pole divisor is bounded by~$\delta$.
\item Let $z=\lambda^{-1}$ be the local parameter near $P_\infty^\pm$.
      Then
      \[
      \tilde\psi \sim \frac{\alpha}{z}\bigl[(1,0)+O(z)\bigr]e^{\ii s/(2z)}\ (P\to P_\infty^+),\qquad
      \tilde\psi \sim \bigl[(0,1)+O(z)\bigr]e^{-\ii s/(2z)}\ (P\to P_\infty^-),
      \]
      and $\tilde\psi=O(1)\exp(\pm \ii t/(2\lambda))$ ($P\to P_0^\pm$).
\end{enumerate}
Moreover, $\tilde\psi$ is given by the theta-functional formula
\begin{align}
\label{eq:tildepsi1}
\tilde{\psi}_1(P;s,t)
:=&
-i\alpha\sqrt{\beta}\,
\exp\!\left[
 \tfrac{\ii s}{2}\!\left(\Omega_1(P)+\tfrac{E}{2}\right)
+\tfrac{\ii t}{2}\!\left(\Omega_2(P)-\tfrac{H}{2}\right)
+\Omega_3(P)\right]\!\\
&\times
\frac{\theta(\Ab_-(P)-\mathbf{W}(s,t)-\mathbf{D}-\mathbf{r})\,\theta(\mathbf{D}-\mathbf{r})}
     {\theta(\Ab_-(P)-\mathbf{D})\,\theta(\mathbf{W}(s,t)+\mathbf{D})},\nonumber\\
\label{eq:tildepsi2}
\tilde{\psi}_2(P;s,t)
:=&
\exp\!\left[
 \tfrac{\ii s}{2}\!\left(\Omega_1(P)-\tfrac{E}{2}\right)
+\tfrac{\ii t}{2}\!\left(\Omega_2(P)+\tfrac{H}{2}\right)\right]\!
\frac{\theta(\Ab_-(P)-\mathbf{W}(s,t)-\mathbf{D})\,\theta(\mathbf{D})}
     {\theta(\Ab_-(P)-\mathbf{D})\,\theta(\mathbf{W}(s,t)+\mathbf{D})}.
\end{align}
Here $E\in\R$ is the constant appearing in
$\Omega_1(P)=\pm(\frac{1}{z}-\frac{E}{2}+O(z))$ as $P\to P_\infty^\pm\ (z=\lambda^{-1})$,
and we set $H/2:=\Omega_2(P_\infty^+)=\int_{\bar{\lambda}_{g+1}}^{P_\infty^+} d\Omega_2$.
The constant $\alpha\in\C^\times$ is arbitrary.
\end{lemma}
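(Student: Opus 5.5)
We split the argument into uniqueness, which is a Riemann--Roch statement, and existence, which amounts to checking that the theta formulas \eqref{eq:tildepsi1}--\eqref{eq:tildepsi2} satisfy (BA1)--(BA2).

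\textbf{Uniqueness.} Suppose $\tilde\psi$ and $\tilde\psi'$ both satisfy (BA1)--(BA2).
\begin{itemize}
\item First take the second components. The ratio $\tilde\psi_2'/\tilde\psi_2$ has no essential singularities, since the exponential factors $e^{\pm \ii s/(2z)}$ and $e^{\pm \ii t/(2\lambda)}$ cancel. Their difference $h=\tilde\psi_2'-\tilde\psi_2$ is therefore of the form (exponential factor)$\times f$, with $f$ meromorphic and $(f)+\delta\ge0$ away from the marked points.
\item At $P_\infty^-$ the normalization $(0,1)+O(z)$ forces $f(P_\infty^-)=0$. So $f\in L(\delta-P_\infty^-)$.
\item By (D1), $L(\delta)$ consists of the constants only. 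Hence $f$ is a constant vanishing at $P_\infty^-$, so $f\equiv0$.
\item For the first component, $\tilde\psi_1$ has a simple zero at $P_\infty^-$ (it is $O(z)$ there) and the leading term $\alpha/z$ at $P_\infty^+$. The same argument, applied with the divisor $\delta+P_\infty^+-P_\infty^-$, shows that $\tilde\psi_1$ is unique once $\alpha$ is fixed.
\item Strictly, one needs $\ell(\delta+P_\infty^+-P_\infty^-)=1$. This follows from non-speciality of $\delta$ and Riemann--Roch, for generic $(s,t)$ by the theta-divisor argument below, and then for all $(s,t)$ by continuity.
\end{itemize}

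\textbf{Existence: single-valuedness.} I will verify directly that the displayed formulas are single-valued on $\mathcal R$ minus the marked points.
\begin{itemize}
\item Going around an $a_k$-cycle leaves $\Ab_-$ changed by $2\pi\ii\mathbf e_k$. The integrals $\Omega_j$ are $a$-normalized, so their $a$-periods vanish. The theta factors are $2\pi\ii$-periodic.
\item Going around $b_k$ shifts $\Ab_-$ by $\boldsymbol\tau_k$. By the quasi-periodicity of $\theta$, the ratio $\theta(\Ab_- - \mathbf W - \mathbf D)/\theta(\Ab_- - \mathbf D)$ acquires the factor $e^{W_k}=e^{-\ii(sU_k+tV_k)/2}$.
\item This factor is exactly cancelled by the $b$-periods $\tfrac{\ii s}{2}U_k+\tfrac{\ii t}{2}V_k$ of the exponent.
\item For $\tilde\psi_1$ the extra shift $-\mathbf r$ in the theta argument produces a factor $e^{r_k}$. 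This is compensated by $\int_{b_k}d\Omega_3=r_k$, which enters with the appropriate sign.
\item The jumps of $\Omega_3$ by $\pm2\pi\ii$ around $P_\infty^\pm$ are harmless, because $\Omega_3$ appears only through $e^{\Omega_3}$.
\end{itemize}
I expect this sign and normalization bookkeeping to be where most care is needed: which sign convention makes $e^{\Omega_3}$ compensate $-\mathbf r$, and the orientation conventions relating $\int_{b_j}d\Omega_3$ to $\int_{P_\infty^-}^{P_\infty^+}\omega_j$ via the reciprocity law.

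\textbf{Existence: poles and zeros.} By Riemann's vanishing theorem, $\theta(\Ab_-(P)-\mathbf D)$ with $\mathbf D=\Ab_-(\delta)+K_-$ vanishes exactly on $\delta$. This uses that $\delta$ is non-special, which is (D1). The poles of both components are therefore bounded by $\delta$, which gives (BA1).
\begin{itemize}
\item The numerator of $\tilde\psi_2$ vanishes at $g$ points. In particular it is nonzero at $P_\infty^-$, where it equals $\theta(-\mathbf W-\mathbf D)\theta(\mathbf D)$.
\item At $P_\infty^-$ the denominator equals $\theta(-\mathbf D)\theta(\mathbf W+\mathbf D)$. Since $\theta$ is even, the value of $\tilde\psi_2$ there is $e^{(\text{exponent})}\cdot1$.
\item With $\Omega_1\sim-(1/z-E/2)$ and $\Omega_2(P_\infty^-)=-H/2$ (by the anti-symmetry under the sheet involution), the exponent reduces to $-\ii s/(2z)$. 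This gives $\tilde\psi_2\sim e^{-\ii s/(2z)}$ at $P_\infty^-$.
\end{itemize}

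\textbf{Existence: asymptotics.}
\begin{itemize}
\item At $P_\infty^+$, $\tilde\psi_2=O(1)e^{\ii s/(2z)}$ once the $E$-shifts are taken into account, giving the $O(z)$ lower-order entry after factoring $\alpha/z$. Here the normalization of $H$ is used precisely so that the $t$-exponent vanishes at $P_\infty^+$.
\item For $\tilde\psi_1$, the factor $e^{\Omega_3}\sim (z\sqrt\beta\,e^{-\pi\ii/2})^{-1}$ at $P_\infty^+$ produces the leading $\alpha/z$. The constant $-\ii\sqrt\beta$ is tuned to cancel both $\sqrt\beta$ and $e^{-\pi\ii/2}$.
\item The quotient $\theta(\mathbf D-\mathbf r)/\theta(\mathbf W+\mathbf D)$ evaluated at $P_\infty^+$ gives $1$, because $\Ab_-(P_\infty^+)=\mathbf r$.
\item At $P_\infty^-$, $e^{\Omega_3}=O(z)$, so $\tilde\psi_1$ has the required zero.
\item Near $P_0^\pm$, the only singular term is $\tfrac{\ii t}{2}\Omega_2\sim\pm\ii t/(2\lambda)$, giving the stated behaviour.
\end{itemize}

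The main technical check, which I would carry out carefully first, is that $\theta(\mathbf W+\mathbf D)\neq0$ and $\theta(\mathbf D-\mathbf r)\neq0$. This is needed so that the formula is defined and the normalization is attained. Condition (D3) together with the reality of $\mathbf W$ modulo the lattice (from the $\sigma$-symmetry \eqref{eq:taucondition}) should place $\mathbf W+\mathbf D$ on a real torus avoiding the theta divisor; I would verify this via \eqref{eq:thetaconjugate} and the standard argument of \cite{Date}.
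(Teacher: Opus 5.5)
Your plan has the same architecture as the paper's proof: uniqueness by reducing to $L(\delta)$, and existence by Riemann vanishing plus reading off the asymptotics. Your existence part is actually more complete than the paper's.

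\textbf{The uniqueness step has a genuine gap.} It is the same step the paper takes when it puts $f_j,\tilde f_j$ into $L(\delta)$. After you strip off the exponential factor, $f$ is not single-valued on $\mathcal R$. Your own monodromy computation shows that $\exp\bigl[\tfrac{\ii s}{2}\Omega_1+\tfrac{\ii t}{2}\Omega_2\bigr]$ acquires $e^{-W_k}$ along $b_k$, so $f$ acquires $e^{W_k}$. Hence $f\notin L(\delta)$, and (D1) says nothing about it. The argument visibly proves too much: applied to $\tilde\psi_2$ itself, it would force $\theta(\Ab_-(P)-\mathbf W-\mathbf D)\theta(\mathbf D)/\bigl(\theta(\Ab_-(P)-\mathbf D)\theta(\mathbf W+\mathbf D)\bigr)$ to be constant in $P$.

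\textbf{The repair is the ratio you wrote down first and then abandoned.} $\tilde\psi_2'/\tilde\psi_2$ is a genuine single-valued meromorphic function. Its poles are bounded by the zero divisor $\delta(s,t)$ of $\theta(\Ab_-(P)-\mathbf W-\mathbf D)$, where $\Ab_-(\delta(s,t))+K_-\equiv\mathbf W+\mathbf D$. This theta function equals $\theta(\mathbf W+\mathbf D)$ at $P_\infty^-$. So whenever $\theta(\mathbf W+\mathbf D)\neq0$, it is not identically zero, $\delta(s,t)$ is non-special, and the ratio is constant, equal to $1$ by the normalization at $P_\infty^-$. Equivalently, $f$ is a section of a degree-$g$ line bundle twisted by $\mathbf W$, with no nonzero section vanishing at $P_\infty^-$ exactly when $\theta(\mathbf W+\mathbf D)\neq0$. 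For $\tilde\psi_1$, argue the same way with $\theta(\Ab_-(P)-\mathbf W-\mathbf D-\mathbf r)$. It also equals $\theta(\mathbf W+\mathbf D)$ at $P_\infty^+$ because $\Ab_-(P_\infty^+)=\mathbf r$, so normalize there. Your remark about $\ell(\delta+P_\infty^+-P_\infty^-)$ concerns the untwisted class, which is relevant only at $(s,t)=(0,0)$.

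\textbf{The continuity step also fails.} The passage ``generic $(s,t)$, then all $(s,t)$ by continuity'' cannot work, because the dimension of the relevant space of sections is only upper semicontinuous. At any real $(s,t)$ with $\theta(\mathbf W+\mathbf D)=0$, a normalized $\tilde\psi$ either fails to exist or fails to be unique. So the lemma, as stated for every $(s,t)\in\R^2$, rests entirely on $\theta(\mathbf W(s,t)+\mathbf D)\neq0$ on $\R^2$. You only sketch this reality statement via (D3), and the paper's proof does not establish it either. This is the substantive missing step.

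\textbf{Two smaller points.} First, the other constants need no reality argument. By evenness, $\theta(\mathbf D)$ and $\theta(\mathbf D-\mathbf r)$ are the values of $\theta(\Ab_-(P)-\mathbf D)$ at $P_\infty^-$ and $P_\infty^+$, so they are nonzero by (D1), (D2) and Riemann vanishing. Second, on the sign you flagged: the shift by $-\mathbf r$ contributes $e^{+r_k}$ along $b_k$, so you need $\int_{b_k}d\Omega_3=-r_k$ with $\mathbf r:=\Ab_-(P_\infty^+)$. This is what the standard reciprocity law gives for residues $-1$ at $P_\infty^+$ and $+1$ at $P_\infty^-$, as in $\Omega_3\sim\mp\log z$. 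Taken literally, the paper's identity $r_j=\int_{b_j}d\Omega_3$ would leave a factor $e^{2r_k}$. Your bullet invoking $\int_{b_k}d\Omega_3=r_k$ should be corrected accordingly.
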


\begin{proof}
\emph{(Existence.)}
Since $\dim L(\delta)=1$, Riemann's vanishing theorem implies that
$\theta(\Ab_-(P)-\mathbf{D})$ vanishes precisely at the points of $\delta$.
Hence $1/\theta(\Ab_-(P)-\mathbf{D})$ has poles bounded by $\delta$ and no other poles.
All other theta factors are holomorphic in $P$,
so \textup{(BA1)} holds.

For \textup{(BA2)}, the exponential factors built from the Abelian integrals $\Omega_j$
reproduce the prescribed essential singularities at $P_\infty^\pm$ and $P_0^\pm$,
while the theta-quotients are bounded near these points and do not change the principal parts.

\emph{(Uniqueness.)}
Fix $(s,t)$ and let $\phi$ be another solution of \textup{(BA1)--(BA2)}.
Introduce the reduced scalar functions
\begin{align*}
  f_1(P)
  &:=
  \exp\!\left[
   -\tfrac{\ii s}{2}\!\left(\Omega_1(P)+\tfrac{E}{2}\right)
   -\tfrac{\ii t}{2}\!\left(\Omega_2(P)-\tfrac{H}{2}\right)
   -\Omega_3(P)\right]\phi_1(P;s,t),\\
  f_2(P)
  &:=
  \exp\!\left[
   -\tfrac{\ii s}{2}\!\left(\Omega_1(P)-\tfrac{E}{2}\right)
   -\tfrac{\ii t}{2}\!\left(\Omega_2(P)+\tfrac{H}{2}\right)\right]\phi_2(P;s,t),
\end{align*}
and define $\tilde f_1,\tilde f_2$ from $\tilde\psi$ analogously.
Then $f_1,f_2,\tilde f_1,\tilde f_2\in L(\delta)$; since $\dim_\C L(\delta)=1$,
we have $\tilde f_j=c_j f_j$ for constants $c_j$.
The normalizations at $P_\infty^\pm$ force $c_1=c_2=1$, hence $\phi=\tilde\psi$.
\end{proof}
\begin{proposition}[The normalized BA function solves the PLR Lax pair]
\label{prop:tildepsi-Lax}
Let $\tilde\psi$ be the Baker--Akhiezer function in Lemma~\ref{lem:BA-PLR}, and define
\begin{equation}\label{eq:q-theta}
  q(s,t)
   = 2\ii\sqrt{\beta}\,
     \exp(-\ii Es + \ii H t)\,
     \frac{\theta\bigl(\mathbf{W}(s,t)+\mathbf{D}-\mathbf{r}\bigr)}
          {\theta\bigl(\mathbf{W}(s,t)+\mathbf{D}\bigr)}.
\end{equation}
Then, for each $P\in\mathcal R$ with $\lambda=\lambda(P)$, the row vector
$\tilde\psi(P;s,t)$ satisfies
\begin{equation}\label{eq:tildepsi-Lax}
  \partial_s \tilde\psi = \tilde\psi\,L,\qquad
  \partial_t \tilde\psi = \tilde\psi\,M,
\end{equation}
where $L,M$ are the matrices in \eqref{eq:L-M-PLR}. In particular, $q$ is a
finite-gap solution of the PLR equation \eqref{eq:PLR-q}.
\end{proposition}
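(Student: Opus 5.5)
The plan is the standard uniqueness argument for Baker--Akhiezer functions. I would show that $\partial_s\tilde\psi-\tilde\psi L$ and $\partial_t\tilde\psi-\tilde\psi M$ satisfy the defining properties of a BA function (pole divisor bounded by $\delta$, with prescribed exponential factors) but with \emph{zero} leading coefficients at the normalization points. They then vanish identically by the uniqueness part of Lemma~\ref{lem:BA-PLR}. Concretely, the reduced functions $f_1,f_2$ from that proof lie in $L(\delta)$, and since $\dim L(\delta)=1$ and the normalization pins their values at $P_\infty^\pm$, a reduced function whose normalizing coefficient is zero must vanish. The same argument produces $L,M$ and identifies $q$ with \eqref{eq:q-theta}.

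\emph{The $s$-equation.} I would write $\tilde\psi=(\tilde\psi_1,\tilde\psi_2)$ and expand near $P_\infty^\pm$:
\[
\tilde\psi_1=e^{\ii s\lambda/2}\bigl(\alpha\lambda+\xi_0^++O(\lambda^{-1})\bigr),\qquad
\tilde\psi_2=e^{\ii s\lambda/2}\bigl(\eta_0^++O(\lambda^{-1})\bigr)
\]
at $P_\infty^+$, and similarly at $P_\infty^-$ with $e^{-\ii s\lambda/2}$, where $\tilde\psi_1=O(1)$ and $\tilde\psi_2=1+O(\lambda^{-1})$.

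Take $\Phi=\partial_s\tilde\psi-\tilde\psi L$ with $L$ of the form \eqref{eq:L-M-PLR} and an unknown off-diagonal entry. Then $\Phi$ has the same essential singularities and pole divisor as $\tilde\psi$, and its $\lambda$-growth is absorbed by the $\pm\ii\lambda/2$ diagonal term. Requiring the leading coefficients of $\Phi_1$ at $P_\infty^+$ and of $\Phi_2$ at $P_\infty^-$ to vanish determines the off-diagonal entries as expressions in the expansion coefficients. For instance, $q$ is proportional to $\alpha^{-1}$ times the $\lambda^{0}$ coefficient of $\tilde\psi_1 e^{\ii s\lambda/2}$ at $P_\infty^-$.

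Evaluating this coefficient from \eqref{eq:tildepsi1}, using $\Ab_-(P_\infty^-)=0$ and the normalization of $\Omega_3$ (which contributes $-\ii\sqrt\beta\cdot\ii\sqrt\beta^{-1}$-type constants), gives $q$ in the form \eqref{eq:q-theta}. The $\exp(-\ii Es+\ii Ht)$ factor comes from the constants $\pm E/2$ and $\pm H/2$. The $(2,1)$ entry, $-\bar q$, comes out analogously at $P_\infty^+$. To confirm that it really equals $-\bar q$, I would use the reality condition (D3), the conjugation symmetry \eqref{eq:thetaconjugate}, the structure \eqref{eq:taucondition} of $\Im\tau$, and $\sigma^*$ acting on $\Omega_j$. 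These together show that $\sigma^*\overline{\tilde\psi}$ is related to $\tilde\psi$ by the swap $(\psi_1,\psi_2)\mapsto(\overline{\psi_2},-\overline{\psi_1})$ up to a constant, which yields the $\mathfrak{su}(2)$ form.

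\emph{The $t$-equation.} Here I would work at $P_0^\pm$. Take $\Psi=\partial_t\tilde\psi-\tilde\psi M'$ with $M'=\lambda^{-1}A(s,t)$, where $A$ is an unknown traceless matrix. I would choose $A$ so that the singular $\lambda^{-1}$ terms of $\Psi$ cancel at $P_0^\pm$. Then $\Psi$ is $O(1)e^{\pm\ii t/(2\lambda)}$ there. At $P_\infty^\pm$, $\Psi$ is $O(\lambda^0)$ in the first component at $P_\infty^+$ and $O(\lambda^{-1})$ in the second component at $P_\infty^-$, so its normalizing coefficients vanish. Two vector conditions at $P_0^\pm$ determine $A$, and uniqueness then gives $\Psi\equiv0$.

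\emph{Identifying $A$.} I would not compute $A$ from theta functions. Instead, once both equations hold, the zero-curvature condition $L_t-M'_s+[L,M']=0$ holds identically in $\lambda$, because a frame built from $\tilde\psi(P)$ and $\tilde\psi(\iota P)$ (with $\iota$ the sheet involution) is generically invertible. Expanding in powers of $\lambda$, the $\lambda^{0}$ part gives the off-diagonal entries of $A$ in terms of $q_t$. The $\lambda^{-1}$ part gives the diagonal entry as $\frac{\ii}{2}\Re(q_{st}/q)$, together with the relation $q_{st}+\frac12 q\int^s(|q|^2)_t=0$. So $M'=M$, and Proposition~\ref{prop:Lax-representation} gives that $q$ solves \eqref{eq:PLR-q}.

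\emph{Main obstacle.} I expect the hardest step to be the reality/normalization check: verifying that the $(2,1)$ entry is exactly $-\bar q$ and that the diagonal of $A$ is real up to the factor $\ii$, rather than off by a constant. I would first try to establish the symmetry $\overline{\tilde\psi(\sigma P)}\propto \tilde\psi(P)J$ from (D3) and a choice of $\alpha$ with $|\alpha|$ fixed.
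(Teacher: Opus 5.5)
Your overall route matches the paper's: show that $\partial_s\tilde\psi-\tilde\psi L$ and $\partial_t\tilde\psi-\tilde\psi M$ are admissible BA-type functions with vanishing normalization, then conclude by uniqueness. The gap is in the step that fixes the off-diagonal entries of $L$.

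The conditions you impose are the vanishing of the normalizing coefficient of $\Phi_1$ at $P_\infty^+$ (order $\lambda$) and of $\Phi_2$ at $P_\infty^-$ (order $\lambda^0$). These hold for \emph{every} choice of off-diagonal entries, because the normalizations $\alpha$ and $(0,1)$ are $s$-independent and the diagonal $\pm\ii\lambda/2$ matches the exponent at those points. So they determine nothing.

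At the opposite points the diagonal term does not absorb the growth; it doubles it. Write $\tilde\psi=\alpha\lambda[(1,0)+(\alpha_{1+},\alpha_{2+})\lambda^{-1}+\cdots]e^{\ii s\lambda/2}$ at $P_\infty^+$ and $\tilde\psi=[(0,1)+(\alpha_{1-},\alpha_{2-})\lambda^{-1}+\cdots]e^{-\ii s\lambda/2}$ at $P_\infty^-$. By (BA2), $\tilde\psi_1=O(\lambda^{-1})e^{-\ii s\lambda/2}$ there, not merely $O(1)$. One finds
\begin{gather*}
\Phi_2=\alpha\bigl(\ii\alpha_{2+}-L_{12}\bigr)\lambda\,e^{\ii s\lambda/2}+O(1)\,e^{\ii s\lambda/2}\quad(P\to P_\infty^+),\\
\Phi_1=\bigl(-\ii\alpha_{1-}-L_{21}\bigr)\,e^{-\ii s\lambda/2}+O(\lambda^{-1})\,e^{-\ii s\lambda/2}\quad(P\to P_\infty^-).
\end{gather*}
Unless these terms vanish, the reduced functions of $\Phi$ pick up a pole at $P_\infty^+$ (resp.\ $P_\infty^-$), and the $\dim L(\delta)=1$ argument you invoke no longer applies. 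These admissibility conditions are exactly what the paper's ``$f=O(1)e^{\ii s/(2z)}$ at $P_\infty^+$, $f=o(1)e^{-\ii s/(2z)}$ at $P_\infty^-$'' expresses for both components. They give $L_{12}=\ii\alpha_{2+}$ and $L_{21}=-\ii\alpha_{1-}$, the entries of the paper's $\tilde L$.

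Consequently your recipe for $q$ fails: the $\lambda^0$ coefficient of $e^{\ii s\lambda/2}\tilde\psi_1$ at $P_\infty^-$ is identically zero. The correct source is
\[
q=2\ii\,\alpha^{-1}\times\bigl(\lambda^{0}\text{-coefficient of } e^{-\ii s\lambda/2}\tilde\psi_2 \text{ at } P_\infty^+\bigr),
\]
evaluated with $\Ab_-(P_\infty^+)=\mathbf r$ and $\Omega_2(P_\infty^+)=H/2$. This is what produces $\theta(\mathbf W+\mathbf D-\mathbf r)/\theta(\mathbf W+\mathbf D)$. Since $\Omega_3$ does not enter $\tilde\psi_2$, the factor $\sqrt\beta$ comes from $\alpha^{-1}$ with the paper's $\alpha=\theta(\mathbf D)/(\sqrt\beta\,\theta(\mathbf D-\mathbf r))$. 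When you carry this out, the constants in \eqref{eq:tildepsi2} give $\exp(-\tfrac{\ii}{2}Es+\tfrac{\ii}{2}Ht)$, so compare carefully with the normalization in \eqref{eq:q-theta}.

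What you propose to compute at $P_\infty^-$ with $\Ab_-(P_\infty^-)=0$ is, at order $\lambda^{-1}$, the coefficient $\alpha_{1-}$. It carries $\theta(\mathbf W+\mathbf D+\mathbf r)/\theta(\mathbf W+\mathbf D)$ with a prefactor proportional to $\alpha$, and it feeds the $(2,1)$ entry $-\bar q/2$. That is exactly where your reality argument via (D3) and \eqref{eq:thetaconjugate} is needed; the paper merely asserts this step.

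Your $t$-part is fine. Identifying $M$ through the zero-curvature equation, instead of the paper's direct matching at $P_0^\pm$, is a legitimate and cleaner alternative, with two corrections.
\begin{itemize}
\item For row vectors the compatibility condition is $L_t-M_s+[M,L]=0$. With the sign of Proposition~\ref{prop:Lax-representation} you would get the off-diagonal entries of $A$ with the wrong sign.
\item Once $L_{21}=-\bar q/2$ is known, the $(1,2)$ and $(2,1)$ entries of the $\lambda^{-1}$ equation force $q_{st}/q=\overline{q_{st}/q}$. So the diagonal of $A$ needs no separate reality check.
\end{itemize}
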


\begin{proof}
  We consider the $s$-derivative first.
  The asymptotic expansions of \(\tilde\psi\) near \(P_\infty^\pm\) are
  \begin{align*}
  \tilde\psi \sim \frac{\alpha}{z}\bigl[(1,0)+(\alpha_{1+},\alpha_{2+})z+O(z^2)\bigr]e^{\ii s/(2z)}\ (P\to P_\infty^+),\\
  \tilde\psi \sim \bigl[(0,1)+(\alpha_{1-},\alpha_{2-})z+O(z^2)\bigr]e^{-\ii s/(2z)}\ (P\to P_\infty^-),
  \end{align*}
  where $z=\lambda^{-1}$ is the local parameter near $P_\infty^\pm$ and \(\alpha_{i\pm}\) are functions of $(s,t)$.
  Differentiating with respect to $s$, we define
  \begin{align*}
  &f := \partial_s\tilde\psi - \tilde\psi \tilde L, \\
  &\tilde L = \frac{1}{2}\begin{pmatrix}
  \ii\lambda & 2\ii\alpha_{2+} \\
  -2\ii\alpha_{1-} & -\ii\lambda  
  \end{pmatrix}.
  \end{align*}
  This vector-valued function $f$ is meromorphic on $\mathcal R\setminus\{P_\infty^\pm,P_0^\pm\}$. We now examine the asymptotic behavior of \(f\) when \(P\) tends to \(P_{\infty}^{\pm}\). A direct computation shows that
  \begin{align*}
    f=O(1)e^{\ii s/(2z)}\ (P\to P_\infty^+),\qquad
    f=o(1)e^{-\ii s/(2z)}\ (P\to P_\infty^-).
  \end{align*}
 Therefore, by Corollary 2.26 in \cite{Belokolos}, we obtain \(f\equiv 0\).
 If we choose the constant \(\alpha\) as $\alpha=\frac{\theta(\mathbf D)}{\sqrt{\beta}\,\theta(\mathbf D-\mathbf r)}$, then \(2\ii\alpha_{2+} = q\) and \(2\ii\alpha_{1-} = -\bar q\). Hence $\tilde L$ coincides with the matrix $L$ in \eqref{eq:L-M-PLR}. 
An analogous argument for the \(t\)-derivative shows that $\tilde\psi$ satisfies the second equation in \eqref{eq:tildepsi-Lax} with some matrix $\tilde M$. Matching the leading terms in the asymptotic expansions at \(P_0^\pm\) determines \(\tilde M\) uniquely, and the resulting matrix is precisely \(M\) in \eqref{eq:L-M-PLR}.

Finally, the compatibility of \eqref{eq:tildepsi-Lax} is exactly the zero-curvature equation
for \eqref{eq:L-M-PLR}; therefore Proposition~\ref{prop:Lax-representation} implies that
$q$ satisfies \eqref{eq:PLR-q}.
\end{proof}

\begin{corollary}[Gauge-fixed representative]
\label{cor:BA-theta}
Define
\begin{equation}\label{eq:psi-from-tildepsi}
  \psi(P;s,t)
  := \frac{\theta(\Ab_-(P)-\mathbf D)}{\theta(\mathbf D)}\,\tilde\psi(P;s,t),
\end{equation}
where \(\tilde\psi\) is defined by \eqref{eq:tildepsi1}, \eqref{eq:tildepsi2}.
Then $\psi=(\psi_1,\psi_2)$ has the following theta-functional expression
\begin{align}
\label{eq:psi}
\psi_1(P;s,t)
&= -\ii\exp\!\left[
 \tfrac{\ii s}{2}\!\left(\Omega_1(P)+\tfrac{E}{2}\right)
+\tfrac{\ii t}{2}\!\left(\Omega_2(P)-\tfrac{H}{2}\right)
+\Omega_3(P)\right]
\frac{\theta(\Ab_-(P)-\mathbf W(s,t)-\mathbf D-\mathbf r)}
     {\theta(\mathbf W(s,t)+\mathbf D)},\\
\psi_2(P;s,t)\nonumber
&= \exp\!\left[
 \tfrac{\ii s}{2}\!\left(\Omega_1(P)-\tfrac{E}{2}\right)
+\tfrac{\ii t}{2}\!\left(\Omega_2(P)+\tfrac{H}{2}\right)\right]
\frac{\theta(\Ab_-(P)-\mathbf W(s,t)-\mathbf D)}
     {\theta(\mathbf W(s,t)+\mathbf D)}.
\end{align}
Moreover, $\psi$ satisfies the Lax system satisfied by $\tilde\psi$ in \eqref{eq:tildepsi-Lax}, i.e.,
\begin{equation}\label{eq:psi-Lax}
  \partial_s\psi=\psi\,L,\qquad
  \partial_t\psi=\psi\,M,
\end{equation}
with the potential $q$ given by \eqref{eq:q-theta}.
\end{corollary}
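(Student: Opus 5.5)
The corollary has two parts: the explicit theta formula for $\psi$, and the Lax system for $\psi$. Both follow from Lemma~\ref{lem:BA-PLR} and Proposition~\ref{prop:tildepsi-Lax} once one observes that the prefactor in \eqref{eq:psi-from-tildepsi} is a scalar depending only on $P$.

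For the theta formula, multiply \eqref{eq:tildepsi1} and \eqref{eq:tildepsi2} by $\theta(\Ab_-(P)-\mathbf D)/\theta(\mathbf D)$.

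\emph{Second component.} In $\tilde\psi_2$ the factor $\theta(\Ab_-(P)-\mathbf D)$ in the denominator cancels. The factor $\theta(\mathbf D)$ in the numerator cancels against the new $1/\theta(\mathbf D)$. What remains is exactly the stated formula for $\psi_2$.

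\emph{First component.} In $\tilde\psi_1$ the factor $\theta(\Ab_-(P)-\mathbf D)$ cancels in the same way. We are left with the constant $-\ii\alpha\sqrt\beta\,\theta(\mathbf D-\mathbf r)/\theta(\mathbf D)$. We now use the normalization fixed in the proof of Proposition~\ref{prop:tildepsi-Lax},
\[
\alpha=\frac{\theta(\mathbf D)}{\sqrt\beta\,\theta(\mathbf D-\mathbf r)}.
\]
With this choice the constant collapses to $-\ii$, which gives the stated formula for $\psi_1$. I will state explicitly that $\alpha$ takes this value, since Lemma~\ref{lem:BA-PLR} leaves $\alpha$ arbitrary.

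For the Lax system, write $c(P)=\theta(\Ab_-(P)-\mathbf D)/\theta(\mathbf D)$. This function does not depend on $(s,t)$. Hence
\[
\partial_s\psi=c(P)\,\partial_s\tilde\psi=c(P)\,\tilde\psi L=\psi L,
\]
and the same computation gives $\partial_t\psi=\psi M$. Here $L$ and $M$ are the same matrices as in Proposition~\ref{prop:tildepsi-Lax}, built from the $q$ of \eqref{eq:q-theta}. The point is that these matrices depend only on $\lambda(P)$ and $(s,t)$, not on how the vector $\tilde\psi$ is scaled. At the divisor points, where $c(P)$ vanishes, both sides of the identities vanish, so nothing breaks there.

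The only delicate issue is the normalization of $\alpha$: the $-\ii$ prefactor in $\psi_1$ holds only for the specific $\alpha$ above, so that choice has to be invoked explicitly. Everything else is direct cancellation.
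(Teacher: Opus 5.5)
Your proposal is correct and follows the paper's proof: you substitute $\alpha=\theta(\mathbf D)/(\sqrt\beta\,\theta(\mathbf D-\mathbf r))$, cancel the theta factors, and then carry the Lax equations over from $\tilde\psi$ to $\psi$ because the prefactor does not depend on $(s,t)$. One side remark needs correcting: at the points of $\delta$, the zero of $c(P)$ cancels the pole of $\tilde\psi$, so $\psi$ is finite there but generally nonzero; the identities hold at those points by continuity (or directly from the explicit formula), not because both sides vanish.
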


\begin{proof}
Substituting $\alpha=\frac{\theta(\mathbf D)}{\sqrt{\beta}\,\theta(\mathbf D-\mathbf r)}$ into \eqref{eq:tildepsi1} and multiplying by
$\theta(\Ab_-(P)-\mathbf D)/\theta(\mathbf D)$ gives \eqref{eq:psi-from-tildepsi} and the
explicit formulas \eqref{eq:psi} immediately.

Since the prefactor in \eqref{eq:psi-from-tildepsi} depends only on $P$ and is independent
of $(s,t)$, differentiating with respect to $s$ and $t$ shows that $\psi$ satisfies exactly the
same equations as $\tilde\psi$. Hence \eqref{eq:psi-Lax} follows from
Proposition~\ref{prop:tildepsi-Lax}.
\end{proof}

\begin{remark}
Strictly speaking, $\psi$ is not a single-valued function on $\mathcal R$,
because it depends on the choice of integration path in the Abel--Jacobi map
$\Ab_-(P)$. In what follows, we fix the integration paths once and for all, so
that $\psi$ may be treated as a well-defined branch without further comment.
\end{remark}

\subsection{Representation formula}
\begin{theorem}
  Let \(\psi\) be the gauge-fixed Baker--Akhiezer function in Corollary~\ref{cor:BA-theta}, and let \(\Lambda_0 > 0\) be a real number such that the point \(P_0\) lying above \(\lambda = \Lambda_0\) is not a branch point of the spectral curve \(\mathcal{R}\). Then the curve of Lund-Regge evolution \(\gamma(s,t)\) defined by
\[
\gamma = \left.\left(\frac{d}{d\lambda} \Psi \right)\Psi^{-1}\right|_{\lambda=\Lambda_0},\quad
\Psi = \frac{1}{\sqrt{|\psi_1|^2 + |\psi_2|^2}}
\begin{pmatrix}
\psi_1 & \psi_2 \\
-\bar{\psi}_2 & \bar{\psi}_1
\end{pmatrix}.
\]

The components \(\gamma_{11}, \gamma_{21}\) are:
\begin{align*}
\gamma_{11} &= \frac{i}{2}\left(\frac{d\Omega_1(P)}{d\lambda} s + \frac{d\Omega_2(P)}{d\lambda} t\right)\\
&+ \frac{1}{2\rho} \left(
|\psi_1|^2 \nabla \log \tfrac{\theta(\Ab_-(P) - \varphi -r)}{\theta(\Ab_-(P) + \varphi - r)} +
|\psi_2|^2 \nabla \log \tfrac{\theta(\Ab_-(P) - \varphi)}{\theta(\Ab_-(P) + \varphi)}
\right) \cdot \left.\frac{d\Ab_-(P)}{d\lambda}\right|_{\lambda=\Lambda_0}, \\
\gamma_{21} &= \frac{\psi_1 \psi_2}{\rho} \left.\left(
\nabla \log \tfrac{\theta(\Ab_-(P) - \varphi)}{\theta(\Ab_-(P) - \varphi - r)} \cdot \frac{d\Ab_-(P)}{d\lambda} + \frac{d\Omega_3}{d\lambda}\right)\right|_{\lambda=\Lambda_0},
\end{align*}
where \(\rho = |\psi_1|^2 + |\psi_2|^2\), \(\varphi=\mathbf{W}(s,t)+\mathbf{D}\), and $\nabla$ denotes differentiation with respect to the spectral parameter $\lambda$ through the Abel map.
\end{theorem}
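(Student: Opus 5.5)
The plan is to first show that $\Psi$ is an honest $\SU$-valued solution of the Lax pair \eqref{eq:Lax-PLR} at every real $\lambda$. Then Theorem~\ref{thm:Sym}, in the generalized form of Remark~\ref{rem:reconstruction-point} with reconstruction point $\LamR$, gives that $\gamma$ is a Lund--Regge evolution. The component formulas then follow from a logarithmic-derivative computation combined with the reality properties of the theta function and the Abelian integrals.

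\emph{Step 1 (the frame).} For real $\lambda$ the matrices $L,M$ in \eqref{eq:L-M-PLR} are skew-Hermitian, since $\Re(q_{st}/q)$ is real. By Corollary~\ref{cor:BA-theta} the row $\psi=(\psi_1,\psi_2)$ solves $\psi_s=\psi L$ and $\psi_t=\psi M$. Conjugating these equations and using that $L,M$ lie in $\mathfrak{su}(2)$, one checks that the row $(-\bar\psi_2,\bar\psi_1)$ solves the same system. Moreover
\[
\partial_s(\psi\psi^*)=\psi(L+L^*)\psi^*=0,
\]
and likewise for $\partial_t$. Hence $\rho=|\psi_1|^2+|\psi_2|^2$ depends only on $P$, i.e.\ on $\lambda$, and not on $(s,t)$. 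Consequently $\Psi=\rho^{-1/2}\bigl(\begin{smallmatrix}\psi_1&\psi_2\\-\bar\psi_2&\bar\psi_1\end{smallmatrix}\bigr)$ is an $\SU$-valued solution of \eqref{eq:Lax-PLR} with the potential \eqref{eq:q-theta}. Since $\Lambda_0$ is not a branch point, $\psi$ can be differentiated in $\lambda$ along the chosen sheet, and the Sym formula applies at $\lambda=\Lambda_0$.

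\emph{Step 2 (reduction to logarithmic derivatives).} Using $\Psi^{-1}=\Psi^*$, the scalar factor $\rho^{-1/2}$ contributes only $-\tfrac{\rho_\lambda}{2\rho}I$. Since $\rho_\lambda=2\Re(\psi_{1,\lambda}\bar\psi_1+\psi_{2,\lambda}\bar\psi_2)$, this gives
\[
\gamma_{11}=\frac{\ii}{\rho}\,\Im\bigl(|\psi_1|^2\partial_\lambda\log\psi_1+|\psi_2|^2\partial_\lambda\log\psi_2\bigr).
\]
The off-diagonal entry similarly reduces to $\rho^{-1}\psi_1\psi_2\bigl(\partial_\lambda\log\psi_1-\partial_\lambda\log\psi_2\bigr)$, up to the conjugation convention used for $\lambda$-derivatives of the conjugated entries. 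Next I insert \eqref{eq:psi}. We have
\[
\partial_\lambda\log\psi_1=\tfrac{\ii s}{2}\Omega_1'+\tfrac{\ii t}{2}\Omega_2'+\Omega_3'+\nabla\log\theta(\Ab_-(P)-\varphi-\mathbf r)\cdot\Ab_-'(P),
\]
and $\partial_\lambda\log\psi_2$ has the same form without $\Omega_3'$ and without $\mathbf r$. Taking the difference yields exactly the $\gamma_{21}$ formula, because the $s$- and $t$-terms cancel. In the diagonal entry, the terms $\tfrac{\ii}{2}(s\Omega_1'+t\Omega_2')$ appear weighted by $(|\psi_1|^2+|\psi_2|^2)/\rho=1$.

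\emph{Step 3 (reality, the main obstacle).} It remains to show the following. First, $\Omega_1',\Omega_2',\Omega_3'$ are real at real $\lambda$, so $\Omega_3'$ drops out of $\Im$. Second, the conjugate of $\nabla\log\theta(\Ab_-(P)-\varphi-\mathbf r)\cdot\Ab_-'$ equals $\nabla\log\theta(\Ab_-(P)+\varphi-\mathbf r)\cdot\Ab_-'$, and similarly without $\mathbf r$. Given these, $2\ii\,\Im$ of the theta term becomes $\nabla\log$ of the stated quotients, with the factor $\tfrac{1}{2\rho}$.

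I would derive both facts from the involution $\sigma$. From the normalization and the behaviour of the homology basis under $\sigma$, one has $\overline{\sigma^*\omega_j}=-\omega_j$, and the analogous relations hold for $d\Omega_k$ with the given asymptotics. Also $\sigma$ fixes the fibre over real $\lambda$ up to the sheet swap $\mu\mapsto-\mu$. Combining this with \eqref{eq:thetaconjugate}, the structure \eqref{eq:taucondition} of $\Im\tau$, condition (D3) on $\mathbf D$ (which controls $\bar{\mathbf D}$ modulo $\Gamma$ and $\mathbf r$), and the fact that $\mathbf W$ is purely imaginary for real $\mathbf U,\mathbf V$, one should get that $\overline{\theta(\Ab_-(P)-\varphi-\mathbf r)}$ equals $\theta(\Ab_-(P)+\varphi-\mathbf r)$ times a factor depending only on $P$.

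I expect the bookkeeping of the half-periods $\pi\ii$ from \eqref{eq:taucondition} and of the lattice shifts to be the delicate part. My first attempt would be to check it on the exponential quasi-periodicity factors: any extra factor depending only on $P$ contributes a real logarithmic derivative, and this must cancel against the $\Omega_3'$ and $\rho$ terms.
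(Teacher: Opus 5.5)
Your overall route is the paper's. The paper also reduces $\gamma_{11}$ to $\frac{1}{2\rho}\sum_j|\psi_j|^2\,\partial_\lambda\log(\psi_j/\bar\psi_j)$. It then evaluates $\psi_j/\bar\psi_j$ on the real slice from the reality relations $\overline{\Omega_j}=\Omega_j$, $\overline{\Ab_-(P)}=\Ab_-(P)$, $\bar{\mathbf D}=-\mathbf D$, $\bar{\mathbf r}=\mathbf r$. Your Step~1 (unitarity, $\rho$ independent of $(s,t)$, $\Psi$ solving the Lax pair so that the Sym formula applies) is correct and supplies something the paper leaves implicit.

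The genuine gap is the fallback in Step~3. Deriving the relations from $\sigma$ as you propose gives them only modulo $\Gamma$. The paper takes them as genuine identities once the integration paths are fixed. You instead allow leftover lattice factors and claim that any such factor has a real logarithmic derivative which ``must cancel against the $\Omega_3'$ and $\rho$ terms''. Nothing is left to cancel it: $\Omega_3'$ is real and has already dropped out of the imaginary part, and the $\rho$-term was already used to reach the $\Im$ form. Concretely, write $'$ for $d/d\lambda$ and suppose $\bar{\mathbf D}\equiv-\mathbf D+\tau m\pmod{2\pi\ii\Z^g}$ with $0\neq m\in\Z^g$, keeping $\overline{\Ab_-}=\Ab_-$ and $\bar{\mathbf r}=\mathbf r$. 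Then $\theta(\mathbf u-\tau m)=e^{-\frac12 m\tau m^{\mathsf T}+m\cdot\mathbf u}\,\theta(\mathbf u)$ gives
\[
  \overline{\partial_\lambda\log\theta\bigl(\Ab_-(P)-\varphi\bigr)}
  =\partial_\lambda\log\theta\bigl(\Ab_-(P)+\varphi\bigr)+m\cdot\Ab_-',
\]
and likewise with $\mathbf r$ inserted. This real term does not drop out of $\partial_\lambda\log(\psi_j/\bar\psi_j)=2\ii\,\Im\,\partial_\lambda\log\psi_j$; it enters with coefficient $-1$. So the true $\gamma_{11}$ equals the displayed right-hand side minus $\tfrac12\,m\cdot\Ab_-'(\LamR)$. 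Since $\gamma_{11}$ is purely imaginary, the displayed formula would then simply be false.

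What Step~3 actually has to establish is that, for the branches of $\Ab_-$, $\mathbf D$, $\mathbf r$ used in \eqref{eq:psi}, the conjugation relations hold modulo $2\pi\ii\Z^g$ only, with no $\tau$-component. Shifts by $2\pi\ii\Z^g$ are harmless by periodicity of $\theta$. The derivative identities are not the issue: $\overline{\sigma^*\omega_j}=-\omega_j$ and its analogues for $d\Omega_k$ make $\Ab_-'$ and $\Omega_k'$ exactly real on the real slice. The paper asserts the exact identities by fixing paths, and your proposal gives no argument for them.

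Separately, your claim that the difference of logarithmic derivatives gives ``exactly'' the displayed $\gamma_{21}$ is inaccurate. You get
\[
  \partial_\lambda\log\psi_1-\partial_\lambda\log\psi_2
  =\Omega_3'+\nabla\log\tfrac{\theta(\Ab_-(P)-\varphi-\mathbf r)}{\theta(\Ab_-(P)-\varphi)}\cdot\Ab_-',
\]
whose theta quotient is inverted relative to the display. A direct computation with $\Psi^{-1}=\Psi^*$ gives $\gamma_{12}=\frac{\psi_1\psi_2}{\rho}\bigl(\nabla\log\tfrac{\theta(\Ab_-(P)-\varphi)}{\theta(\Ab_-(P)-\varphi-\mathbf r)}\cdot\Ab_-'-\Omega_3'\bigr)$ and $\gamma_{21}=\frac{\bar\psi_1\bar\psi_2}{\rho}\,\partial_\lambda\log(\bar\psi_1/\bar\psi_2)$. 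No conjugation convention produces the displayed relative sign between the theta term and $\Omega_3'$. The paper's proof does not treat this entry at all, so you should state which entry you compute and record its correct sign rather than claim a match.
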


\begin{proof}
We now focus on the case \(\lambda > 0\).
Denote by \(P \in \mathcal{R}\) the point lying above that real value.
Because the anti-holomorphic involution \(\sigma\) and the sheet
change \(\iota\) satisfy \(\sigma\iota(P)=P\) on the real slice,
we immediately obtain, 
\[
\overline{\Omega_j(P)} = \Omega_j(P)\quad (j=1,2,3),
\]
and modulo the period lattice \(\Gamma\),
\[
\overline{\Ab(P)} = \Ab(P),\qquad
\overline{\mathbf{D}} = -\mathbf{D},\qquad
\overline{\mathbf{r}} = \mathbf{r}.
\]
Since each \(\psi_j\) is a single-valued function,
we may regard the equalities as genuine identities rather than only
modulo \(\Gamma\).
Compute \(\gamma_{11}\):
\begin{align*}
\gamma_{11}
  &= -\frac12 \frac{d}{d\lambda}\log\!\bigl(|\psi_1|^{2}+|\psi_2|^{2}\bigr)
     + \frac{1}{|\psi_1|^{2}+|\psi_2|^{2}}
       \bigl(\,\overline{\psi_1}\,\tfrac{d}{d\lambda}\psi_1 +
              \overline{\psi_2}\,\tfrac{d}{d\lambda}\psi_2\bigr)\\[4pt]
  &= \frac12\,
     \frac{1}{|\psi_1|^{2}+|\psi_2|^{2}}
     \Bigl(
        |\psi_1|^{2}\,\frac{d}{d\lambda}\log\!\Bigl(\frac{\psi_1}{\overline{\psi_1}}\Bigr)
        + |\psi_2|^{2}\,\frac{d}{d\lambda}\log\!\Bigl(\frac{\psi_2}{\overline{\psi_2}}\Bigr)
     \Bigr).
\end{align*}

By using the reality relations written above and \(\overline{\theta(\mathbf{u})} = \theta(\bar{\mathbf{u}})\), we find
\begin{align}
&\frac{\psi_1}{\overline{\psi_1}}
   = \exp\bigl(is\,\Omega_1(P) + it\,\Omega_2(P)\bigr)
     \frac{\theta\!\bigl(\Ab_-(P)-\varphi - r\bigr)}
          {\theta\!\bigl(\Ab_-(P)+\varphi - r\bigr)},\\
&\frac{\psi_2}{\overline{\psi_2}}
   = \exp\bigl(is\,\Omega_1(P) + it\,\Omega_2(P)\bigr)
     \frac{\theta\!\bigl(\Ab_-(P)-\varphi\bigr)}
          {\theta\!\bigl(\Ab_-(P)+\varphi\bigr)},
\end{align}
and substituting these expressions yields the desired formula.
\end{proof}

\section{Closure conditions}\label{sec:closure}

In this section we derive spectral condition for the curve evolution reconstructed
from the PLR wave function to be closed in the space variable \(s\) and to be
periodic in the time variable \(t\).

Let \(\Omega_1\) and \(\Omega_2\) be the Abelian integrals
and \(\Psi(s,t;\lambda)\in\SU\) be the frame constructed from the Baker--Akhiezer function in Section~3, and define the PLR curve-evolution by
\begin{equation}\label{eq:Sym}
  \gamma(s,t)
  = \left.
      \bigl(\partial_{\lambda}\Psi(s,t;\lambda)\bigr)\,
      \Psi(s,t;\lambda)^{-1}
    \right|_{\lambda=\Lambda_0},
  \in\mathfrak{su}(2),
\end{equation}
where \(\Lambda_0 > 0\) is a fixed spectral parameter.
\subsection{Spatial closure in the arclength variable}\label{subsec:closure-s}

We first state the closure condition in the arclength variable \(s\).

\begin{theorem}[Spatial closure in \(s\)]\label{thm:closure-s}
Let \(L>0\) and suppose \(L\mathbf{U} = 4\pi(n_1, \dots, n_g)\) where \(n_{i} \in \Z\).
Then the curve \(\gamma(s,t)\) defined by \eqref{eq:Sym} is closed with period
\(L\) in \(s\), if and only if \textup{(i)} \(\dd\Omega_1(P) = 0\) and~\textup{(ii)} \(L\,\Omega_1(P)\in 2\pi\Z\) where \(\lambda(P) = \Lambda_0\).
\end{theorem}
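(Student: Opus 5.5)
The plan is to study how the frame \(\Psi(s,t;\lambda)\) transforms under the shift \(s\mapsto s+L\), and then push this through the Sym formula \eqref{eq:Sym}.

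\textbf{Step 1: periodicity of the theta factors.} Since \(\mathbf W(s+L,t)=\mathbf W(s,t)-\tfrac{\ii}{2}L\mathbf U\) and \(L\mathbf U=4\pi\mathbf n\), the shift of \(\mathbf W\) is \(-2\pi\ii\,\mathbf n\in\Gamma\), a pure \(a\)-period. The first transformation law of \(\theta\) then shows that every theta factor in \eqref{eq:psi} is invariant. Consequently \(q\) in \eqref{eq:q-theta} picks up only the constant phase \(e^{-\ii EL}\), and the theta quotients in \(\psi_1,\psi_2\) are \(L\)-periodic.

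\textbf{Step 2: transformation of \(\Psi\).} The only remaining \(s\)-dependence of \(\psi_j\) sits in the exponentials \(\exp[\tfrac{\ii s}{2}(\Omega_1(P)\pm\tfrac E2)]\). Hence
\[
\psi_1(s+L)=e^{\frac{\ii L}{2}(\Omega_1+\frac E2)}\psi_1(s),\qquad
\psi_2(s+L)=e^{\frac{\ii L}{2}(\Omega_1-\frac E2)}\psi_2(s).
\]
For real \(\lambda>0\) we have \(\overline{\Omega_1(P)}=\Omega_1(P)\) by the reality relations in the proof of the representation theorem. So these factors are unimodular, \(\rho\) is periodic, and
\[
\Psi(s+L;\lambda)=e^{\frac{\ii L}{2}\Omega_1(\lambda)}\Psi(s;\lambda)\,\Delta,\qquad \Delta=\operatorname{diag}\bigl(e^{\ii LE/4},e^{-\ii LE/4}\bigr).
\]
This is a scalar times a right diagonal factor. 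I will check that \(\Psi\) is genuinely \(\SU\)-valued, with determinant equal to \(1\). Equivalently, I can replace this form by \(\Psi(s+L)=\Psi(s)\,e^{\frac{\ii L}{2}\Omega_1\sigma_3}\), as written in the diagonal-exponent form that makes the frame unimodular; the bookkeeping of which form is correct is exactly what I must fix.

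\textbf{Step 3: apply the Sym formula.} With \(\Psi(s+L)=\Psi(s)\,K(\lambda)\), where \(K(\lambda)=\exp\bigl(\tfrac{\ii L}{2}\Omega_1(\lambda)\sigma_3\bigr)\) (the constant \(E\)-part is independent of \(\lambda\) and drops out after differentiation), we get
\[
\gamma(s+L)=\gamma(s)+\Psi(s)\,\bigl(\partial_\lambda K\,K^{-1}\bigr)\,\Psi(s)^{-1}\Big|_{\Lambda_0}+\Psi(s)\bigl[K\text{-conjugation effect}\bigr].
\]
More precisely,
\[
\partial_\lambda(\Psi K)(\Psi K)^{-1}=\partial_\lambda\Psi\,\Psi^{-1}+\Psi\,(\partial_\lambda K K^{-1})\Psi^{-1}.
\]
Here \(\partial_\lambda K K^{-1}=\tfrac{\ii L}{2}\,\tfrac{d\Omega_1}{d\lambda}\,\sigma_3\), so closure \(\gamma(s+L)=\gamma(s)\) is equivalent to \(\tfrac{d\Omega_1}{d\lambda}(\Lambda_0)=0\), that is, condition (i). Condition (ii) then enters through the frame: \(\Psi(s+L)\) must equal \(\Psi(s)\) up to the harmless constant diagonal factor. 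In other words, \(K(\Lambda_0)=e^{\frac{\ii L}{2}\Omega_1\sigma_3}\) must be \(\pm I\) (or central), which is \(L\Omega_1(P)\in2\pi\Z\). Equivalently, (ii) is needed so that the Frenet-type frame, and hence the curve data given by the representation theorem's formula for \(\gamma_{21}\), which carries \(\psi_1\psi_2\) with phase \(e^{\ii L\Omega_1}\), is periodic.

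For the converse, I read off from the explicit components in the representation theorem. \(\gamma_{11}(s+L)-\gamma_{11}(s)=\tfrac{\ii}{2}L\,\tfrac{d\Omega_1}{d\lambda}\) forces (i). The factor \(\psi_1\psi_2\) in \(\gamma_{21}\) gets multiplied by \(e^{\ii L\Omega_1}\) while the bracket is periodic; this forces (ii), provided the bracket does not vanish identically in \(s\).

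\textbf{Main obstacle.} The hardest step is the careful phase bookkeeping. This covers the contribution of \(\Omega_3\) and the \(E\)-terms, checking that \(\Psi\) is \(\SU\)-valued so that \(K\) is diagonal unitary, and justifying the nondegeneracy claim that the \(\gamma_{21}\) bracket is not identically zero. I would first settle the nondegeneracy through the asymptotics of the theta quotient, or by noting that \(\gamma_{21}\equiv0\) would make \(\gamma\) a straight line, contradicting \(\kappa=|q|\not\equiv0\).
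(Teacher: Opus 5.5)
There is a genuine error in Steps 2--3, and it changes the conclusion.

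The transformation law you end up using is wrong. This applies to $\Psi(s+L)=\Psi(s)K(\lambda)$ with $K$ acting on the right, and also to the scalar form $e^{\frac{\ii L}{2}\Omega_1}\Psi(s)\Delta$. The second row of $\Psi$ is $(-\bar\psi_2,\bar\psi_1)$. Since $\Omega_1$ is real at real $\lambda$, that row picks up the conjugate phase $e^{-\frac{\ii L}{2}\Omega_1}$. Hence
\[
\Psi(s+L;\lambda)=K(\lambda)\,\Psi(s;\lambda)\,\Delta,\qquad K(\lambda)=\operatorname{diag}\bigl(e^{\frac{\ii L}{2}\Omega_1(\lambda)},e^{-\frac{\ii L}{2}\Omega_1(\lambda)}\bigr),
\]
with $K$ acting on the \emph{left}. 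This is exactly the paper's $L^*\Psi=M\Psi N$.

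The side matters. Differentiating the Sym formula in $s$ with the Lax equation gives $\Psi\,\tfrac{\ii}{2}\operatorname{diag}(1,-1)\,\Psi^{-1}=\gamma_s$. So your right action gives $\gamma(s+L)=\gamma(s)+L\,\frac{d\Omega_1}{d\lambda}(\LamR)\,\gamma_s(s)$. Closure would then be equivalent to (i) alone, and (ii) would be superfluous. Reinstating (ii) ``through the frame'' does not repair this, because closure of $\gamma$ does not require $\Psi$ to be periodic.

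With the correct left action, $\gamma(s+L)=K\gamma(s)K^{-1}+(\partial_\lambda K)K^{-1}$ at $\lambda=\LamR$. Geometrically, this is a rotation about the $e_3$-axis by the angle $L\,\Omega_1(\LamR)$, composed with a translation along that axis by $L\,\frac{d\Omega_1}{d\lambda}(\LamR)$.

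Sufficiency of (i) and (ii) is then immediate. For necessity, the axis component forces (i). Then $\operatorname{Ad}_{K(\LamR)}\gamma=\gamma$ forces $K(\LamR)=\pm I$, which is (ii). Otherwise the unit-speed curve would lie on the axis, and that is impossible for an $L$-periodic curve.

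This is precisely what your last paragraph computes entrywise: the $(1,1)$ entry shifts by $\frac{\ii}{2}L\frac{d\Omega_1}{d\lambda}$, and the off-diagonal entries are multiplied by $e^{\pm\ii L\Omega_1}$. That paragraph is correct; it is really the necessity direction, not the converse. But it contradicts your Step 3 rather than completing it.

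Once you replace Step 3 by the left action, your proof follows the paper's route. Your nondegeneracy remark then supplies the justification the paper only asserts, namely the passage from closure of $\gamma$ to $M|_{\lambda=\LamR}=\pm I$.
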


\begin{proof}
Fix \(t\) and consider the shift operator in the arclength variable
\[
  L^* : \psi(s,t;\lambda) \longmapsto \psi(s+L,t;\lambda)
\]
acting on the Baker--Akhiezer function
\(\psi = (\psi_1,\psi_2)\).
Using the explicit finite-gap representation of \(\psi\) from Section~3 one
checks that \(L^*\) acts diagonally on \((\psi_1,\psi_2)\) as
\begin{equation}\label{eq:L-shift-psi}
  L^*
  (\psi_1(s,t;\lambda),\psi_2(s,t;\lambda))
  =(\psi_1(s,t;\lambda),\psi_2(s,t;\lambda))X(\lambda),
\end{equation}
with
\begin{equation}\label{eq:X-lambda}
  X(\lambda) :=
  \begin{pmatrix}
    \exp\bigl(\tfrac{\ii}{2}L(\Omega_1(\lambda)+\tfrac{E}{2})\bigr) & 0\\[0.3em]
    0 &
    \exp\bigl(-\tfrac{\ii}{2}L(\Omega_1(\lambda)-\tfrac{E}{2})\bigr)
  \end{pmatrix},
\end{equation}
where \(E\) is the constant appearing in the asymptotics of \(\Omega_1\) at
\(P_{\infty}^{\pm}\).

From \eqref{eq:L-shift-psi} it follows that \(L^*\) acts on \(\Psi\) by
\begin{equation}\label{eq:L-shift-Psi}
  L^*\Psi(s,t;\lambda)
  = M(\lambda)\,\Psi(s,t;\lambda)\,N,
\end{equation}
with
\begin{equation}\label{eq:M-and-N}
  M(\lambda)
  =
  \begin{pmatrix}
    \exp\bigl(\tfrac{\ii}{2}L\Omega_1(\lambda)\bigr) & 0\\[0.3em]
    0 &
    \exp\bigl(-\tfrac{\ii}{2}L\Omega_1(\lambda)\bigr)
  \end{pmatrix},
  \
  N = \begin{pmatrix}
      \exp\bigl(\frac{i}{4}LE\bigr) & 0 \\
      0 & \exp\bigl(-\frac{i}{4}LE\bigr)
    \end{pmatrix}.
\end{equation}

Applying \(L^*\) to the Sym formula \eqref{eq:Sym} and using
\eqref{eq:L-shift-Psi} we obtain
\begin{align*}
  L^*\gamma(s,t)
  &= \left.
     \bigl(\partial_{\lambda} (L^*\Psi)\bigr)
     (L^*\Psi)^{-1}\right|_{\lambda=\Lambda_0}\\
  &= \left.
     \bigl(\partial_{\lambda} (M\Psi N)\bigr)
     (N^{-1}\Psi^{-1}M^{-1})\right|_{\lambda=\Lambda_0}\\
  &= \left.
     \bigl((\partial_{\lambda} M)\Psi + M(\partial_{\lambda}\Psi)\bigr)
     \Psi^{-1}M^{-1}\right|_{\lambda=\Lambda_0}
\end{align*}
The curve is closed with period \(L\) in \(s\) if and only if
\(L^*\gamma(s,t)=\gamma(s,t)\) for all \(s\), i.e.
\begin{equation}\label{eq:closure-M}
  M\,\gamma(s,t)\,M^{-1}
  + \bigl(\partial_{\lambda} M\bigr)\,M^{-1}|_{\lambda=\Lambda_0}
  = \gamma(s,t)
\end{equation}
for all \(s\).

We first show that \eqref{eq:closure-M} implies
\[
  M|_{\lambda=\Lambda_0} = \pm I,\qquad
  \bigl(\partial_{\lambda} M\bigr)|_{\lambda=\Lambda_0} = 0.
\]
When the spatial periodicity condition holds, the frame of the curve satisfies
\begin{equation*}
  \Psi(s,t,\lambda)\big|_{\lambda=\Lambda_0}
    = \pm M\,\Psi(s,t,\lambda)\,N\big|_{\lambda=\Lambda_0}.
\end{equation*}
Hence we have \(M|_{\lambda=\Lambda_0}=\pm I\) and \(N=\pm I\), and conditions~\textup{(i)} and~\textup{(ii)} follow.

Conversely, if \(M|_{\lambda=\Lambda_0}=\pm I\) and \(\bigl(\partial_{\lambda} M\bigr)|_{\lambda=\Lambda_0}=0\),
then \eqref{eq:closure-M} reduces to \(L^*\gamma(s,t)=\gamma(s,t)\) for all
\(s\), so the curve is closed with period \(L\) in \(s\).

Finally we rewrite these conditions in terms of the quasimomentum
\(\Omega_1\) using \eqref{eq:M-and-N}.
From the explicit form of \(M(\lambda)\) we have
\[
  M|_{\lambda=\Lambda_0}=\pm I
  \quad\Longleftrightarrow\quad
  \exp\bigl(\tfrac{\ii}{2}L\Omega_1(1)\bigr)=\pm1
  \quad\Longleftrightarrow\quad
  L\,\Omega_1|_{\lambda=\Lambda_0}\in 2\pi\Z,
\]
which is item~\textup{(ii)}.
Differentiating \(M(\lambda)\) and multiplying by \(M(\lambda)^{-1}\) gives
\[
  \bigl(\partial_{\lambda} M\bigr)(\lambda)\,M(\lambda)^{-1}
  = \frac{\ii}{2}L
    \begin{pmatrix}
      d\Omega_1(\lambda) & 0\\[0.2em]
      0 & -d\Omega_1(\lambda)
    \end{pmatrix},
\]
so the condition \(\bigl(\partial_{\lambda} M\bigr)|_{\lambda=\Lambda_0
}=0\) is equivalent to
\(d\Omega_1|_{\lambda=\Lambda_0}=0\), which is item~\textup{(i)}.
This completes the proof.
\end{proof}

\subsection{Periodicity in the time variable}\label{subsec:closure-t}

The time-periodicity of the filament is characterized in exactly the same
way, with \(\Omega_1\) replaced by the \(\Omega_2\).

\begin{theorem}[Periodicity in \(t\)]\label{thm:closure-t}
Let \(T>0\), and let \(\Omega_2\) be the Abelian integral introduced in the subsection \ref{subsec:BA-theta}.
Suppose \(T\mathbf{V} = 4\pi(m_1, \dots, m_g)\) where \(m_{i} \in \Z\).
The curve \(\gamma(s,t)\) defined by \eqref{eq:Sym} is periodic with period
\(T\) in \(t\),
\[
  \gamma(s,t+T) = \gamma(s,t)\quad\text{for all }(s,t)\in\R^2,
\]
if and only if \textup{(i)} \(\dd\Omega_2(P) = 0\) and~\textup{(ii)} \(T\,\Omega_2(P)\in 2\pi\Z\).
\end{theorem}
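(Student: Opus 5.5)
The plan is to follow the proof of Theorem~\ref{thm:closure-s}, with the time-shift operator in place of the arclength shift. Fix \(s\) and define
\[
  T^*:\psi(s,t;\lambda)\longmapsto\psi(s,t+T;\lambda).
\]
In Corollary~\ref{cor:BA-theta}, the \(t\)-dependence of \(\psi\) enters in two places: through the exponential factors \(\exp\bigl(\tfrac{\ii t}{2}(\Omega_2(P)\mp\tfrac{H}{2})\bigr)\), and through the argument \(\mathbf W(s,t)=-\tfrac{\ii}{2}(s\mathbf U+t\mathbf V)\) of the theta functions.

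The hypothesis \(T\mathbf V=4\pi(m_1,\dots,m_g)\) gives
\[
  \mathbf W(s,t+T)=\mathbf W(s,t)-2\pi\ii(m_1,\dots,m_g).
\]
By the \(2\pi\ii\mathbf e_k\)-periodicity of \(\theta\), every theta quotient in \eqref{eq:psi} is therefore invariant under \(T^*\). Consequently \(T^*\) acts diagonally, \(T^*\psi=\psi\,Y(\lambda)\), with
\[
  Y(\lambda)=\operatorname{diag}\Bigl(\exp\bigl(\tfrac{\ii}{2}T(\Omega_2(\lambda)-\tfrac H2)\bigr),\ \exp\bigl(\tfrac{\ii}{2}T(\Omega_2(\lambda)+\tfrac H2)\bigr)\Bigr).
\]
I would read this off directly from \eqref{eq:psi}, checking the signs carefully, since \(\psi_2\) carries \(+H/2\) while \(\psi_1\) carries \(-H/2\).

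Next I pass to the normalized frame \(\Psi\). The normalization \(\rho=|\psi_1|^2+|\psi_2|^2\) is unchanged, because on the real slice \(\lambda>0\) the reality relations \(\overline{\Omega_2(P)}=\Omega_2(P)\) and \(H\in\R\) make the factors in \(Y\) unimodular. The second row of \(\Psi\) is built from conjugates. Combining the two rows, I will show
\[
  T^*\Psi=M_2(\lambda)\,\Psi\,N_2,
\]
where \(M_2=\operatorname{diag}\bigl(e^{\frac{\ii}{2}T\Omega_2},e^{-\frac{\ii}{2}T\Omega_2}\bigr)\) depends on \(\lambda\) through \(\Omega_2\), and \(N_2=\operatorname{diag}\bigl(e^{-\frac{\ii}{4}TH},e^{\frac{\ii}{4}TH}\bigr)\) is a constant \(\SU\) matrix independent of \(\lambda\). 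As in \eqref{eq:L-shift-Psi}, this factorization is the key structural step, and I expect it to be the main obstacle. The difficulty is that the common phase \(e^{\frac{\ii}{2}T\Omega_2}\) multiplies both \(\psi_1\) and \(\psi_2\), so it does not split immediately as a left diagonal times right diagonal factor. I would first try to mimic exactly the bookkeeping used for \(X(\lambda)\) in \eqref{eq:X-lambda}, where the same issue was handled by putting the \(\Omega\)-dependent part on the left and the constant part on the right. If that fails, I would use instead that any \(\lambda\)-independent right factor and any scalar phase drop out of \((\partial_\lambda\Psi)\Psi^{-1}\) once \(\Psi\) is forced into \(\SU\).

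With the factorization in hand, the Sym formula \eqref{eq:Sym} gives
\[
  T^*\gamma=M_2\gamma M_2^{-1}+(\partial_\lambda M_2)M_2^{-1}\big|_{\lambda=\Lambda_0},
\]
since \(N_2\) is constant in \(\lambda\) and cancels. Periodicity \(T^*\gamma=\gamma\) for all \(s,t\) is then equivalent to \(M_2|_{\Lambda_0}=\pm I\) together with \((\partial_\lambda M_2)|_{\Lambda_0}=0\), argued as in the proof of Theorem~\ref{thm:closure-s}.

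For the forward direction, I would argue that since \(\gamma\) (for generic \(s,t\), with \(q\not\equiv0\)) is not invariant under conjugation by a nontrivial diagonal matrix, the constant term and the conjugation must vanish separately. For the converse, I simply substitute \(M_2=\pm I\) and \(\partial_\lambda M_2=0\).

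Finally, I translate these into spectral conditions. From \(M_2=\pm I\) we get \(e^{\frac{\ii}{2}T\Omega_2(P)}=\pm1\), that is, \(T\Omega_2(P)\in2\pi\Z\), which is condition (ii). From
\[
  (\partial_\lambda M_2)M_2^{-1}=\tfrac{\ii}{2}T\,\operatorname{diag}\bigl(d\Omega_2/d\lambda,\,-d\Omega_2/d\lambda\bigr)
\]
we see that vanishing is equivalent to \(d\Omega_2(P)=0\) at \(\lambda(P)=\Lambda_0\), which is condition (i).
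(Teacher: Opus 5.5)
Your proposal is correct and is essentially the paper's proof, which repeats the $s$-closure argument with the time shift $T^*$ and $K(\lambda)=\operatorname{diag}\bigl(e^{\mathrm{i}T\Omega_2/2},e^{-\mathrm{i}T\Omega_2/2}\bigr)$ (your $M_2$) in place of $L^*$ and $M(\lambda)$. The factorization $T^*\Psi=M_2\Psi N_2$ that you flagged as the main obstacle is immediate: the common phase $e^{\mathrm{i}T\Omega_2/2}$ multiplies the first row $(\psi_1,\psi_2)$ and its conjugate multiplies the second row, which is exactly left multiplication by $M_2$. Your forward direction is sound once made precise (the diagonal part of $M_2\gamma M_2^{-1}+(\partial_\lambda M_2)M_2^{-1}=\gamma$ forces $\partial_\lambda M_2=0$, and the off-diagonal part together with $q\not\equiv 0$ forces $M_2=\pm I$), and it is more careful than the paper's bare assertion that periodicity gives $\Psi=\pm M\Psi N$.
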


\begin{proof}
The proof is completely analogous to that of Theorem~\ref{thm:closure-s}.
One replaces the spatial shift
\(L^*:\psi(s,t;\lambda)\mapsto\psi(s+L,t;\lambda)\) by the time shift
\(T^*:\psi(s,t;\lambda)\mapsto\psi(s,t+T;\lambda)\), uses the explicit
dependence of the Baker--Akhiezer function on \(\Omega_2\), and repeats the
argument with the diagonal matrix
\[
  K(\lambda)
  =
  \begin{pmatrix}
    \exp\bigl(\tfrac{\ii}{2}T\Omega_2(\lambda)\bigr) & 0\\[0.3em]
    0 &
    \exp\bigl(-\tfrac{\ii}{2}T\Omega_2(\lambda)\bigr)
  \end{pmatrix},
\]
in place of \(M(\lambda)\).
We therefore omit the details.
\end{proof}


\section{Explicit formulas for genus one case}\label{sec:genus1}
Assume \(g=1\) with branch points \(\lambda_1,\bar\lambda_1,\lambda_2,\bar\lambda_2\), so the Riemann surface is given by 
\begin{equation*}
  \mu^2 = \prod_{j=1}^2 (\lambda - \lambda_j)(\lambda - \bar\lambda_j).
\end{equation*}

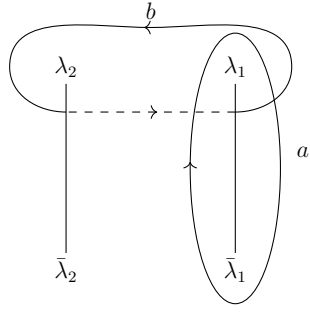
\begin{figure}[h]
\begin{tikzpicture}[scale=0.75, transform shape]
  
  
  
  \node at (2,1.8) {$\lambda_{2}$}; 
  \node at (2,-1.8) {$\bar\lambda_{2}$};
  \draw (2,1.5) -- (2,-1.5);
  \draw[-<-] (5,0) ellipse (0.8cm and 2.4cm);
  \node at (5,1.8) {$\lambda_{1}$}; 
  \node at (5,-1.8) {$\bar\lambda_{1}$};
  \draw (5,1.5) -- (5,-1.5);
  \node at (6.2,0.3) {$a$};
  
  \draw[-<-]
    (2,1) to[out=180,in=-90] (1,1.8)
    to[out=90,in=180] (3.5,2.5)
    to[out=0,in=90] (6,1.8)
    to[out=-90,in=0](5,1);
  \draw[dashed,-<-]
    (5,1) to[out=180,in=0](2,1);
    \node at (3.5,2.8) {$b$};



  \end{tikzpicture}
  \caption{Homology basis for genus one Riemann surface $\mathcal{R}$}\label{fig:homologybasisg1}
\end{figure}

\subsection{Holomorphic differential and periods}

Using the standard elliptic parametrization (cf.~\cite{CaliniIvey}, Section~3), one computes
\begin{gather*}
  \int_{a}\frac{\dd\lambda}{\mu}
    = \frac{4\ii}{|\lambda_1 - \bar\lambda_2|}\,K(p),\qquad
  \int_{b}\frac{\dd\lambda}{\mu}
    = -\,\frac{4}{|\lambda_1 - \bar\lambda_2|}\,K(p'),\\
  p^2 = 1 - (p')^2,\qquad
  p' = |h|,\quad
  h = \frac{\lambda_1 - \lambda_2}{\lambda_1 - \bar{\lambda}_2},
\end{gather*}
where \(K(\cdot)\) denotes the complete elliptic integral of the first kind.
Thus the normalized holomorphic differential \(\omega\) satisfying \(\int_a\omega = 2\pi\ii\) is given by
\begin{equation*}
  \omega
    = 2\pi\ii\,\left(\frac{|\lambda_1 - \bar\lambda_2|}{4\ii\,K(p)}\right)\frac{\dd\lambda}{\mu}.
\end{equation*}
In particular, the period matrix reduces to the single scalar
\begin{equation*}
  \tau = \int_b \omega = -\,2\pi\,\frac{K(p')}{K(p)}.
\end{equation*}

We next introduce normalized Abelian differentials of second and third kind,
which will determine the vectors \(U,V,r\) entering the finite-gap formula.
\begin{align*}
    d\Omega_1 &= \dfrac{\lambda^{2} - \frac{c}{2}\lambda - c_1}{\mu}d\lambda \sim \mp z^{-2}dz \quad P \to P_{\infty}^{\pm}\\
    d\Omega_2 &= -\mu_0\dfrac{\lambda^{-2} - \frac{d}{2}\lambda^{-1} - c_2}{\mu}d\lambda \sim \mp \lambda^{-2}d\lambda \quad P \to P_0^{\pm}\\
    d\Omega_3 &= \dfrac{\lambda - c_3}{\mu}d\lambda \sim \pm z^{-1}dz \quad P \to P_{\infty}^{\pm} 
\end{align*}
Here, in the \((\lambda,\mu)\)-coordinates,
\(P_{0}^+=(0,\mu_0)\) and \(P_{0}^-=(0,-\mu_0)\), where \(\mu_0 = |\lambda_1\lambda_2|\), and
\begin{align*}
    c &= \lambda_1 +\bar\lambda_1 + \lambda_2 +\bar\lambda_2,\quad d = \lambda_1^{-1} +\bar\lambda_1^{-1} + \lambda_2^{-1} +\bar\lambda_2^{-1},\\
    c_1 &= \dfrac{1}{2}\left(|\lambda_1-\bar\lambda_2|^2\frac{E(p)}{K(p)}-|\lambda_1|^2-|\lambda_2|^2\right),\\
    c_2 &= \dfrac{1}{2}\left(|\lambda_1^{-1}-\bar\lambda_2^{-1}|^2\frac{E(p)}{K(p)}-|\lambda_1^{-1}|^2-|\lambda_2^{-1}|^2\right),\\
    c_3 &= \lambda_2 + (\lambda_1-\lambda_2)\frac{\Pi(\beta^2,p)}{K(p)},\quad \beta^2 = \dfrac{\lambda_1 - \bar\lambda_1}{\lambda_1 - \bar\lambda_2}.
\end{align*}
The vectors \(U,V,r\) appearing in the argument of the theta function are computed as follows:
\begin{align*}
    U &= \int_{b}d\Omega_1 = \frac{\pi |\lambda_1 - \bar\lambda_2|}{K(p)},\quad
    V = \int_{b}d\Omega_2 = -\frac{\pi |\lambda_1 - \bar\lambda_2|}{\mu_0K} ,\\
    r &= \int_{b}d\Omega_3 = 2\pi \frac{F(\phi,p^{\prime})}{K(p)},\quad \phi = \sin^{-1}\sqrt{\dfrac{h}{|h|}}.  
\end{align*}


\subsection{Periodicity and closure conditions in the genus-one case}

In genus one we rewrite the spatial closure conditions of
Theorem~\ref{thm:closure-s} in an explicit elliptic/Jacobi form.

\begin{proposition}[Genus-one $s$-closure at the reconstruction point $\lambda=\LamR$]
\label{prop:g1-periodicity}
Assume $g=1$ with branch points $\lambda_1,\bar\lambda_1,\lambda_2,\bar\lambda_2$,
and let $d\Omega_1$ be the normalized second-kind differential introduced above.
Let
\[
  U := \int_b d\Omega_1 = \frac{\pi|\lambda_1-\bar\lambda_2|}{K(p)},
\]
where $K(p)$ is the complete elliptic integral of the first kind and $p$ is the modulus
defined in the previous subsection.
Fix $L>0$ such that
\[
  L\,U \in 4\pi\Z,
\]
for instance
\[
  L=L_n:=\frac{4\pi n}{U}
    =\frac{4n\,K(p)}{|\lambda_1-\bar\lambda_2|},
  \qquad n\in\Z_{>0}.
\]

\smallskip
\noindent
Fix a reconstruction point $\LamR>0$ and let $P_0\in\mathcal R$ be the point lying over
$\lambda(P_0)=\LamR$.
Then the reconstructed curve $\gamma$ defined by the Sym formula~\eqref{eq:Sym}
is closed in $s$ with period $L$ if and only if the two closure conditions
in Theorem~\ref{thm:closure-s} hold at $P_0$, namely
\begin{equation}\label{eq:g1-closure-conds}
  d\Omega_1(P_0)=0,
  \qquad
  L\,\Omega_1(P_0)\in 2\pi\Z
  \ \ \Bigl(\Longleftrightarrow\ 
  \exp\bigl(\tfrac{\ii L}{2}\Omega_1(P_0)\bigr)=\pm1\Bigr).
\end{equation}

\smallskip
\noindent
Moreover, in genus one we have
\[
  d\Omega_1
  = \frac{\lambda^{2} - \frac{c}{2}\lambda - c_1}{\mu}\,d\lambda,
\]
so the condition $d\Omega_1(P_0)=0$ is equivalent to
\begin{equation}\label{eq:g1-criticalpoint}
  \LamR^2-\frac{c}{2}\LamR-c_1=0.
\end{equation}
\smallskip
\noindent
If we choose $L=L_n$ as above, then the phase condition
$\exp\bigl(\frac{\ii L}{2}\Omega_1(P_0)\bigr)=\pm1$ can be written in terms of Jacobi data as
\begin{equation}\label{eq:g1-phase-Jacobi}
  \exp\Bigl(
    2\ii\, n\,K(p)\Bigl[
      Z(u;p)
      -\beta^2\frac{\sn(u;p)\cn(u;p)\dn(u;p)}{1-\beta^2\sn^2(u;p)}
    \Bigr]
  \Bigr)=\pm1,
\end{equation}
where $Z(\cdot;p)$ is the Jacobi zeta function, $\sn,\cn,\dn$ are Jacobi elliptic functions,
and the parameters are
\[
  h=\frac{\lambda_1-\lambda_2}{\lambda_1-\bar\lambda_2},\qquad
  X=\frac{1}{h}\frac{\LamR-\lambda_2}{\LamR-\bar\lambda_2},\qquad
  \beta^2=\frac{p^2}{1-\bar h},
\]
together with
\[
  u=F(\phi,p),\qquad
  \phi=\sin^{-1}\!\Bigl(\frac{\sqrt{1-X(p')^2}}{p}\Bigr),
\]
where $F(\phi,p)$ is the incomplete elliptic integral of the first kind and $p'=\sqrt{1-p^2}$.
\end{proposition}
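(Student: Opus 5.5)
The proof has three parts, of which only the last requires real computation. The first part is immediate. For $L=L_n$ we have $LU=4\pi n$, so the quasi-periodicity hypothesis of Theorem~\ref{thm:closure-s} holds with $g=1$. Applying that theorem at the point $P_0$ over $\lambda=\LamR$ gives the equivalence of $s$-closure with \eqref{eq:g1-closure-conds}. The equivalence $L\Omega_1(P_0)\in 2\pi\Z \Leftrightarrow \exp(\tfrac{\ii L}{2}\Omega_1(P_0))=\pm1$ is elementary. It uses $\Omega_1(P_0)\in\R$, which is the reality relation $\overline{\Omega_1(P)}=\Omega_1(P)$ on the real slice established in the proof of the representation theorem.

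For the critical-point criterion \eqref{eq:g1-criticalpoint}, I use the explicit genus-one form $d\Omega_1=(\lambda^2-\tfrac c2\lambda-c_1)\,d\lambda/\mu$. Since $\LamR$ is real and positive and all branch points are non-real (they come in pairs $\lambda_j,\bar\lambda_j$ with $\lambda_j\neq\bar\lambda_j$), $\mu(P_0)\neq0$. Hence $\lambda$ is a local coordinate at $P_0$, and the differential vanishes there exactly when its numerator vanishes at $\LamR$.

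The real work is the phase formula \eqref{eq:g1-phase-Jacobi}. My plan is to compute $\Omega_1(P_0)=\int_{\bar\lambda_2}^{P_0}d\Omega_1$ explicitly via the standard elliptic substitution of \cite{CaliniIvey}, Section~3. First, apply the Möbius change of variable adapted to the cross-ratio $h$, namely $X=\frac1h\frac{\lambda-\lambda_2}{\lambda-\bar\lambda_2}$. This sends $\bar\lambda_2\mapsto\infty$ and $\lambda_2\mapsto0$, and it reduces $d\lambda/\mu$ to a constant multiple of $du$, where $\sn^2(u;p)$ is the rational function of $X$ implicit in $\phi=\sin^{-1}(\sqrt{1-X(p')^2}/p)$. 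Next, express $\lambda^2-\frac c2\lambda-c_1$ in terms of $\sn^2u$. A term $\lambda^2\,d\lambda/\mu$ becomes a combination of $du$, $\sn^2u\,du$ and third-kind terms $du/(1-\beta^2\sn^2u)^k$ with $\beta^2=p^2/(1-\bar h)$. I then integrate with the classical formulas: $\int\sn^2=(u-E(u))/p^2$, $E(u)=Z(u)+\frac{E(p)}{K(p)}u$, and the reduction formula for $\int du/(1-\beta^2\sn^2u)^2$. The last of these produces exactly the boundary term $\beta^2\sn\cn\dn/(1-\beta^2\sn^2)$ together with $\Pi$-terms.

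The choice of $c_1$, which normalizes $\int_a d\Omega_1=0$ and contains $E(p)/K(p)$, should cancel the linear-in-$u$ terms coming from $E(u)$. I expect the $\Pi$-contributions either to cancel or to reduce to the constant $E/2$ absorbed in the normalization. Once the remaining prefactor is identified with $|\lambda_1-\bar\lambda_2|$, multiplying by $L_n/2=2nK(p)/|\lambda_1-\bar\lambda_2|$ should give the exponent $2\ii nK(p)[Z(u)-\beta^2\sn\cn\dn/(1-\beta^2\sn^2)]$.

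This last step is the main obstacle. It requires tracking the constants through the Möbius substitution and checking that the $a$-normalization kills every term that is not of the stated Jacobi form. I would first verify the cancellation symbolically by differentiating the claimed expression in $u$ and comparing with $d\Omega_1/du$. This avoids integrating directly, and the value at the base point $u=0$ (that is, $P=\bar\lambda_2$) then fixes the constant.
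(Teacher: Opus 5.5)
Your first two parts are fine, and they are essentially all the paper itself offers. The proposition is stated there without proof: it is presented as a specialization of Theorem~\ref{thm:closure-s}, plus the explicit form of $d\Omega_1$, plus an unproved Jacobi-form rewriting. (Incidentally, reality of $\Omega_1(P_0)$ is not needed for $L\Omega_1\in 2\pi\Z\Leftrightarrow \exp(\tfrac{\ii L}{2}\Omega_1)=\pm1$; this holds for any complex $\Omega_1$.)

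The genuine gap is the last step of your third part. Your reduction itself is sound:
\begin{itemize}
\item With $\sn^2u=(1-Xp'^2)/p^2$ one gets $\lambda=\bar\lambda_2+(\bar\lambda_1-\bar\lambda_2)/(1-\beta^2\sn^2u)$.
\item The $\Pi$-terms cancel, because $d\Omega_1$ has no residue at $P_\infty^\pm$ (the zeros of $1-\beta^2\sn^2u$).
\item The $u$-linear terms cancel by the $a$-normalization.
\item One obtains $\int(\lambda^2-\tfrac c2\lambda-c_1)\,du=\tfrac12|\lambda_1-\bar\lambda_2|^2\bigl[Z(u)-\beta^2\sn u\cn u\dn u/(1-\beta^2\sn^2u)\bigr]+\mathrm{const}$.
\end{itemize}
However, $d\lambda/\mu=\mp\tfrac{2\ii}{|\lambda_1-\bar\lambda_2|}\,du$ is an imaginary multiple of $du$. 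This is forced by the paper's own $\int_a d\lambda/\mu=4\ii K(p)/|\lambda_1-\bar\lambda_2|$, since $\int_a du=\pm2K$. So the prefactor of the bracket in $\Omega_1$ is $\mp\ii|\lambda_1-\bar\lambda_2|$, not $|\lambda_1-\bar\lambda_2|$. A real prefactor is in fact impossible. We have $Z(u+2\ii K')=Z(u)-\ii\pi/K$, while the $\sn\cn\dn$-term is $2\ii K'$-periodic. A real prefactor would therefore make $U=\int_b d\Omega_1$ purely imaginary, contradicting $U=\pi|\lambda_1-\bar\lambda_2|/K(p)$.

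Your base point is also misplaced. The point $u=0$ is $\bar\lambda_1$ ($X=1/p'^2$), while $\bar\lambda_2$ ($X=\infty$) is $u=\ii K'$. There $Z$ and $\beta^2\sn\cn\dn/(1-\beta^2\sn^2)$ both have simple poles with residue $1$, and the bracket tends to $-\ii\pi/(2K)$. This only contributes a sign $(-1)^n$. Putting things together,
$\exp\bigl(\tfrac{\ii L_n}{2}\Omega_1(P_0)\bigr)=(-1)^n\exp\bigl(\pm2nK(p)\bigl[Z(u)-\beta^2\tfrac{\sn u\cn u\dn u}{1-\beta^2\sn^2u}\bigr]\bigr)$,
with no $\ii$ in the exponent.

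Since $\Omega_1(P_0)$ is real, the bracket is purely imaginary at $P_0$. You can check this directly at the real point $u=K/2$: there $\Re\bigl(\beta^2\sn\cn\dn/(1-\beta^2\sn^2)\bigr)=(1-p')/2=Z(K/2)$. Consequently the left side of \eqref{eq:g1-phase-Jacobi} as printed is a positive real number. It equals $\pm1$ only if the bracket vanishes, which is strictly stronger than the phase condition.

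Hence the step where you expect the prefactor to combine with $L_n/2$ into $2\ii nK(p)$ cannot be carried out; your proposed derivative check would expose this. The displayed formula is not provable as stated. What your method does prove is \eqref{eq:g1-phase-Jacobi} with the factor $\ii$ removed from the exponent (the overall sign of the exponent is irrelevant), and that is the version you should state and prove.
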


\begin{proposition}[Genus-one $t$-closure at the reconstruction point $\lambda=\LamR$]
\label{prop:g1-t-periodicity}
Adopt the assumptions and notation of Proposition~\ref{prop:g1-periodicity}.
In particular, $\LamR>0$ is the reconstruction point, $P_0\in\mathcal R$ satisfies
$\lambda(P_0)=\LamR$, and $p$ is the elliptic modulus.

Let
\[
  V := \int_b d\Omega_2
   = -\frac{\pi|\lambda_1-\bar\lambda_2|}{\mu_0 K(p)}.
\]
Fix $T\in\R$ such that $T\,V\in 4\pi\Z$. For $m\in\Z_{>0}$ we may take
\[
  T=T_m:= -\frac{4\pi m}{V}
  =\frac{4m\,\mu_0\,K(p)}{|\lambda_1-\bar\lambda_2|}.
\]

Then the reconstructed curve $\gamma$ defined by the Sym formula~\eqref{eq:Sym}
is periodic in $t$ with period $T$ if and only if the closure conditions of
Theorem~\ref{thm:closure-t} hold at $P_0$, namely
\begin{equation}\label{eq:g1-t-closure-conds}
  d\Omega_2(P_0)=0,
  \qquad
  T\,\Omega_2(P_0)\in 2\pi\Z
  \ \ \Bigl(\Longleftrightarrow\ 
  \exp\bigl(\tfrac{\ii T}{2}\Omega_2(P_0)\bigr)=\pm1\Bigr).
\end{equation}

Moreover, in genus one
\[
  d\Omega_2
  = -\mu_0\,\frac{\lambda^{-2} - \frac{d}{2}\lambda^{-1} - c_2}{\mu}\,d\lambda,
\]
so $d\Omega_2(P_0)=0$ is equivalent to
\begin{equation}\label{eq:g1-t-criticalpoint}
  \LamR^{-2}-\frac{d}{2}\LamR^{-1}-c_2=0.
\end{equation}

\smallskip
\noindent
If we choose $T=T_m$ as above, the phase condition in \eqref{eq:g1-t-closure-conds}
can be written in Jacobi form using the \emph{same} parameters
$u,\phi,X$ as in Proposition~\ref{prop:g1-periodicity}:
\begin{equation}\label{eq:g1-t-phase-Jacobi}
  \exp\Bigl(
    -2\ii\, m \,K(p)\Bigl[
      Z(u;p)
      -\alpha^2\frac{\sn(u;p)\cn(u;p)\dn(u;p)}{1-\alpha^2\sn^2(u;p)}
    \Bigr]
  \Bigr)=\pm1,
\end{equation}
where $\alpha^2$ is given by
\[
  \alpha^2=\frac{p^2}{1-\frac{\lambda_2}{\bar\lambda_2}\bar h},
  \qquad
  h=\frac{\lambda_1-\lambda_2}{\lambda_1-\bar\lambda_2}.
\]
\end{proposition}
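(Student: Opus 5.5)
The plan is to split Proposition~\ref{prop:g1-t-periodicity} into three parts: the equivalence with Theorem~\ref{thm:closure-t}, the critical-point criterion, and the Jacobi form of the phase condition. The first part needs only the hypotheses of Theorem~\ref{thm:closure-t}. In genus one the quasi-periodicity assumption reads $T V = 4\pi m$ with $m\in\Z$. Substituting $V=-\pi|\lambda_1-\bar\lambda_2|/(\mu_0K(p))$ from the previous subsection shows that $T_m=-4\pi m/V$ is admissible. Theorem~\ref{thm:closure-t} then gives the equivalence of $T$-periodicity with \eqref{eq:g1-t-closure-conds} directly. The equivalence $T\Omega_2(P_0)\in2\pi\Z \Leftrightarrow \exp(\tfrac{\ii T}{2}\Omega_2(P_0))=\pm1$ is immediate.

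For \eqref{eq:g1-t-criticalpoint}, I will use the explicit form $d\Omega_2=-\mu_0(\lambda^{-2}-\tfrac d2\lambda^{-1}-c_2)\,d\lambda/\mu$. The point $P_0$ is not a branch point, since $\LamR$ is real and all branch points are non-real. Hence $\mu(P_0)\neq0$, and $\lambda$ is a local coordinate at $P_0$. Also $\mu_0\neq0$ because $\lambda_j\neq0$. So $d\Omega_2(P_0)=0$ exactly when the numerator vanishes at $\LamR$, which is \eqref{eq:g1-t-criticalpoint}.

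For \eqref{eq:g1-t-phase-Jacobi}, I will reduce to the computation already done for $\Omega_1$ in Proposition~\ref{prop:g1-periodicity}. The substitution $\lambda=1/\xi$ maps the curve to $\tilde\mu^2=\prod(\xi-\lambda_j^{-1})(\xi-\bar\lambda_j^{-1})$ with $\mu=\mu_0\tilde\mu/\xi^2$ up to sign. Under this map, $d\Omega_2$ becomes a second-kind differential of the form $\pm(\xi^2-\tfrac d2\xi-c_2)\,d\xi/\tilde\mu$ with a pole at $\xi=\infty$. This is the same shape as $d\Omega_1$ on the inverted curve, which explains why $d$ and $c_2$ are the inverted versions of $c$ and $c_1$. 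Next I will integrate from $\bar\lambda_2$ to $P_0$ using the Möbius map that sends the branch points to Legendre normal form. This map is the one behind $X=h^{-1}(\LamR-\lambda_2)/(\LamR-\bar\lambda_2)$ and $\sin\phi=\sqrt{1-Xp'^2}/p$. It turns the integrand into $\dn^2$ plus a third-kind term $1/(1-\alpha^2\sn^2)$. Applying the standard identities $\int_0^u\dn^2=E(u)=Z(u)+\tfrac{E}{K}u$ and the Jacobi formula $\Pi(u,\alpha^2)=u\,\Pi(\alpha^2)/K+\dots$ expressed through $Z$ and $\sn\cn\dn/(1-\alpha^2\sn^2)$ will produce the bracket. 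The linear-in-$u$ pieces cancel by the normalization $\int_a d\Omega_2=0$, which is what fixes $c_2$. The factor $-2mK(p)$ then comes from $\tfrac T2$ times the prefactor $|\lambda_1-\bar\lambda_2|/(\mu_0\cdot\ldots)$ arising from $d\lambda/\mu$.

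The main obstacle is this last computation. The difficulty is tracking how the pole at $\lambda=0$ translates into the parameter $\alpha^2=p^2/(1-\tfrac{\lambda_2}{\bar\lambda_2}\bar h)$; it should come from evaluating the Möbius variable at $\lambda=0$ instead of $\lambda=\infty$, which introduces the extra ratio $\lambda_2/\bar\lambda_2$. It also requires verifying that $u,\phi,X$ are indeed the same as in the $\Omega_1$ case, since $P_0$ is the same point. I would first check the proposed identity at $\LamR\to\bar\lambda_2$, where $u\to0$, and then differentiate both sides in $\LamR$ to confirm they agree. This differentiation approach avoids tracking integration constants.
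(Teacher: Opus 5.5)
Your first two steps are fine. They amount to what the paper relies on: the proposition is stated without a separate proof, as the genus-one specialization of Theorem~\ref{thm:closure-t} together with the explicit form of $d\Omega_2$. Your inversion $\lambda=1/\xi$ is also a useful observation. Under it $|\tilde h|=|h|$, and $\tilde h=h\bar\lambda_2/\lambda_2$ turns $\beta^2$ into $\alpha^2$. Moreover $X$ is invariant, and $|\lambda_1^{-1}-\bar\lambda_2^{-1}|=|\lambda_1-\bar\lambda_2|/\mu_0$.

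The gap is in the Jacobi-form step, where several of your concrete claims are wrong. Since $d\Omega_2$ has residue-free double poles at $P_0^\pm$, the integrand in $u$ contains $(1-\alpha^2\sn^2u)^{-2}$, not only a third-kind term. Jacobi's formula for $\Pi(u,\alpha^2)$ involves $\log\Theta(u\mp a)$, so $\Pi$ cannot be ``expressed through $Z$ and $\sn\cn\dn/(1-\alpha^2\sn^2)$''; the $\Pi$-terms must cancel instead. Your normalization check is also misplaced. The point $\lambda=\bar\lambda_2$ corresponds to $X=\infty$, i.e.\ $\sn u=\infty$ and $u=\ii K'$, not $u=0$ (which is $\bar\lambda_1$). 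At $u=\ii K'$ the bracket $F_\alpha(u):=Z(u;p)-\alpha^2\sn u\cn u\dn u/(1-\alpha^2\sn^2u)$ is finite, because the poles of the two terms cancel there. Its value is $-\ii\pi/(2K)$, which produces a half-period shift.

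The decisive problem is a factor of $\ii$ that you never track. The function $F_\alpha$ is $2K$-periodic and satisfies $F_\alpha(u+2\ii K')=F_\alpha(u)-\ii\pi/K$. Modulo $2K\Z+2\ii K'\Z$, its only poles are simple poles of residue $\tfrac12$ at $\pm u_0$, where $\sn^2u_0=1/\alpha^2$; these are exactly the images of $P_0^\pm$. By uniqueness of the $a$-normalized second-kind differential, $d\Omega_2=C\,dF_\alpha$. Comparing $b$-periods gives $C=\pm\ii KV/\pi$, which is purely imaginary because $V$ is real. Equivalently, $d\lambda/\mu=\pm2\ii\,du/|\lambda_1-\bar\lambda_2|$, consistent with the paper's $a$-period $4\ii K/|\lambda_1-\bar\lambda_2|$. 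Hence
\[
  \tfrac{\ii T_m}{2}\,\Omega_2(P_0)\equiv \pm 2mK(p)\,F_\alpha(u)+\ii\pi m \pmod{2\pi\ii\Z},
\]
so what your computation actually yields is $\exp\bigl(2mK(p)F_\alpha(u)\bigr)=\pm1$, with a real coefficient. Your remark that ``$-2mK(p)$ comes from $\tfrac T2$ times the prefactor'' silently drops the $\ii$ carried by $d\lambda/\mu$. Consequently \eqref{eq:g1-t-phase-Jacobi}, with $-2\ii mK$ in the exponent, cannot come out of any correct version of your plan. A quick check confirms this: changing the path to $P_0$ by a $b$-cycle ($u\mapsto u+2\ii K'$) leaves $\exp(\tfrac{\ii T_m}{2}\Omega_2(P_0))$ unchanged but multiplies the displayed left side by $e^{-2\pi m}$. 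You should derive and state the corrected condition rather than assert agreement with the display. The same correction applies to \eqref{eq:g1-phase-Jacobi}, so reducing to the $\Omega_1$ case inherits this issue.
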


\section{Examples and figures}\label{sec:examples}
In this section we present numerical examples in the genus-one case.
We first construct an \(s\)-periodic curve (spatial closure), and then we give an example
of a \(t\)-periodic evolution (time periodicity) at a fixed reconstruction point
\(\lambda=\Lambda_0>0\).

\subsection{An \(s\)-periodic example}\label{subsec:example-s}

\begin{example}[An \(s\)-periodic genus-one curve]\label{ex:s-periodic}
We take the branch points
\[
  \lambda_1=0.454+0.324\,\ii,\qquad
  \lambda_2=-0.454+0.095\,\ii,
\]
together with their complex conjugates.
We fix a reconstruction point \(\Lambda_0>0\) and denote by \(P_0\in\mathcal R\)
the point lying above \(\lambda(P_0)=\Lambda_0\).

We choose \(L>0\) so that \(LU\in4\pi\mathbb Z\) (with \(U=\int_b d\Omega_1\)).
The curve reconstructed by the Sym formula at \(\lambda=\Lambda_0\) is \(s\)-periodic
with period \(L\) provided the \(s\)-closure conditions at \(P_0\) hold:
\[
  d\Omega_1(P_0)=0,
  \qquad
  \exp\Bigl(\frac{\ii L}{2}\Omega_1(P_0)\Bigr)=\pm 1.
\]
In this example we choose \(\Lambda_0\) and \(L\) so that the above conditions are satisfied
numerically, and we plot \(\gamma(s,0)\) over one period \(s\in[0,L]\).
\end{example}

\begin{figure}[h]
  \centering
  \includegraphics[width=0.45\textwidth]{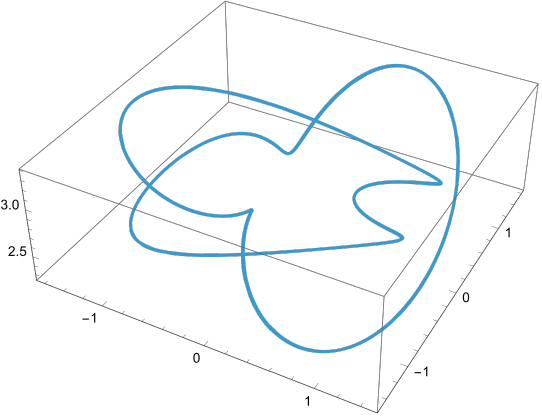}
  \caption{An \(s\)-periodic genus-one PLR space curve.}
  \label{fig:genus1-closed-3D-s}
\end{figure}

\begin{figure}[h]
  \centering
  \includegraphics[width=0.45\textwidth]{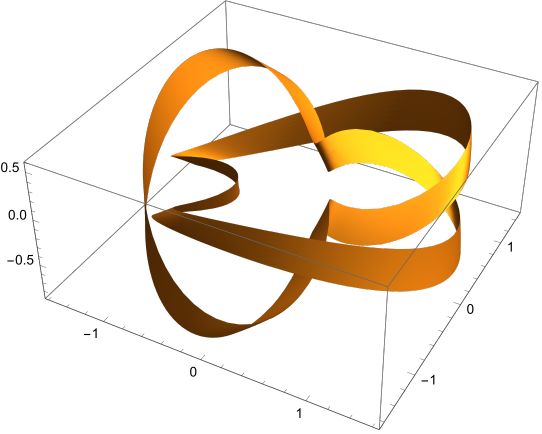}
  \caption{An \(s\)-periodic genus-one PLR surface \((0\leq t \leq 0.1)\).}
  \label{fig:genus1-surface-3D-s}
\end{figure}

\subsection{A \(t\)-periodic example}\label{subsec:example-t}

\begin{example}[A \(t\)-periodic evolution]\label{ex:t-periodic}
We take the branch points
\[
  \lambda_1=1+0.813211\,\ii,\qquad
  \lambda_2=-1+0.813211\,\ii,
\]
together with their complex conjugates.
Fix a reconstruction point \(\Lambda_0>0\) and let \(P_0\in\mathcal R\) satisfy
\(\lambda(P_0)=\Lambda_0\).

We choose \(T>0\) so that \(TV\in4\pi\mathbb Z\) (with \(V=\int_b d\Omega_2\)).
The reconstructed curve is periodic in time with period \(T\) if the \(t\)-closure
conditions at \(P_0\) hold:
\[
  d\Omega_2(P_0)=0,
  \qquad
  \exp\Bigl(\frac{\ii T}{2}\Omega_2(P_0)\Bigr)=\pm 1.
\]
In this example we choose \(\Lambda_0\) and \(T\) so that the above conditions are satisfied
numerically.
\end{example}

\begin{figure}[h]
  \centering
  \includegraphics[width=0.45\textwidth]{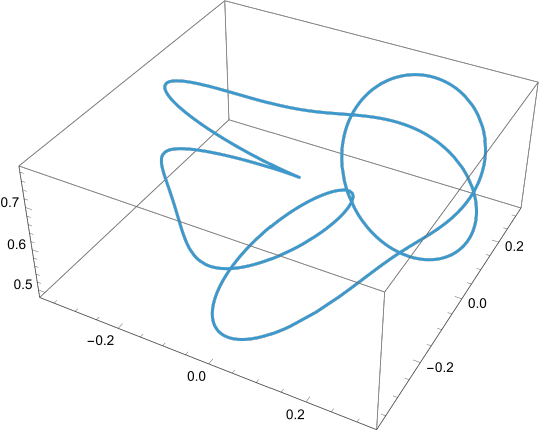}
  \caption{A \(t\)-periodic genus-one PLR space curve.}
  \label{fig:genus1-closed-3D-t}
\end{figure}

\begin{figure}[h]
  \centering
  \includegraphics[width=0.45\textwidth]{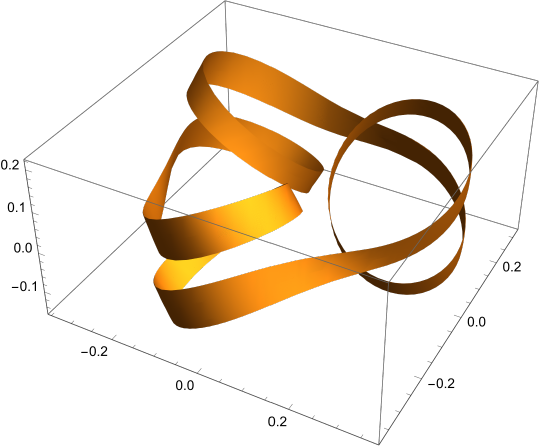}
  \caption{A \(t\)-periodic genus-one PLR surface \((0\leq s \leq 0.1)\).}
  \label{fig:genus1-surface-3D-t}
\end{figure}

\section*{Acknowledgments}
The author would like to express sincere gratitude to Professor Shimpei Kobayashi
for valuable guidance, insightful discussions, and continuous encouragement
throughout this work.

\bibliographystyle{plain}

 \end{document}